  \newtheorem{theorem}{Theorem}
 \newtheorem{lemma}[theorem]{Lemma}
\theoremstyle{definition}
 \newtheorem{definition}[theorem]{Definition}
 \newtheorem{remark}[theorem]{Remark}
\renewcommand{\ggg}{\gamma}
\renewcommand{\lll}{\lambda}
\title{Multifractal properties of convex hulls of typical continuous functions}
\author{Zolt\'an Buczolich\thanks{
Research supported by by the Hungarian
National Foundation for Scientific Research Grant K104178.
\newline\indent {\it 2000 Mathematics Subject
Classification:} Primary : 26B25; Secondary : 26B05, 28A80.
\newline\indent {\it Keywords:} typical continuous function, convex hull, multifractal spectrum.},
Department of Analysis, E\"otv\"os Lor\'and\\
University, P\'azm\'any P\'eter S\'et\'any 1/c, 1117 Budapest, Hungary\\
email: buczo@cs.elte.hu\\
{\tt www.cs.elte.hu/\hbox{$\sim$}buczo}
}
\date{\today}
\begin{document}
\maketitle

\medskip


\begin{abstract}
We study the singularity (multifractal)  spectrum of the convex hull of the typical/generic continuous functions  defined on $[0,1]^{d}$.  We denote by  ${\mathbf E}_\fff^{h} $ the set of points at which $\fff: {[0,1]^d}\to {\ensuremath {\mathbb R}}$ has a pointwise 
H\"older exponent equal to $h$.
Let $H_{f}$ be the convex hull of the graph of $f$,
the concave function on the top of $H_{f}$ is denoted by
$\fff_{1,f}(\bbx)=\max \{y:(\bbx,y)\in H_{f} \}$  and
$\fff_{2,f}(\bbx)=\min \{y:(\bbx,y)\in H_{f} \}$ denotes the convex function on the bottom of $H_{f}$.
 We show that there is a dense $G_\ddd$ subset $\cag\sse {C[0,1]^d}$ such that 
for $f\in \cag$ the following properties are satisfied.
For $i=1,2$ the functions $ {\fff_ {i,f}}$ and $f$ coincide only on a set of zero Hausdorff dimension, the functions $ {\fff_ {i,f}}$ are continuously differentiable
on $(0,1)^{d}$,
${\mathbf E}_{\fff_{i,f}}^{0} $ equals the boundary of $ {[0,1]^d}$,
$\dim_{H}{\mathbf E}_{\fff_{i,f}}^{1}=d-1 $, $\dim_{H}{\mathbf E}_{\fff_{i,f}}^{+\oo}=d $ and ${\mathbf E}_{\fff_{i,f}}^{h}=\ess$ if $h\in(0,+\oo)\sm \{1 \}$.
\end{abstract}

\begin{figure}
  \begin{center}
  \includegraphics[width=14.1cm,height=8.1cm]{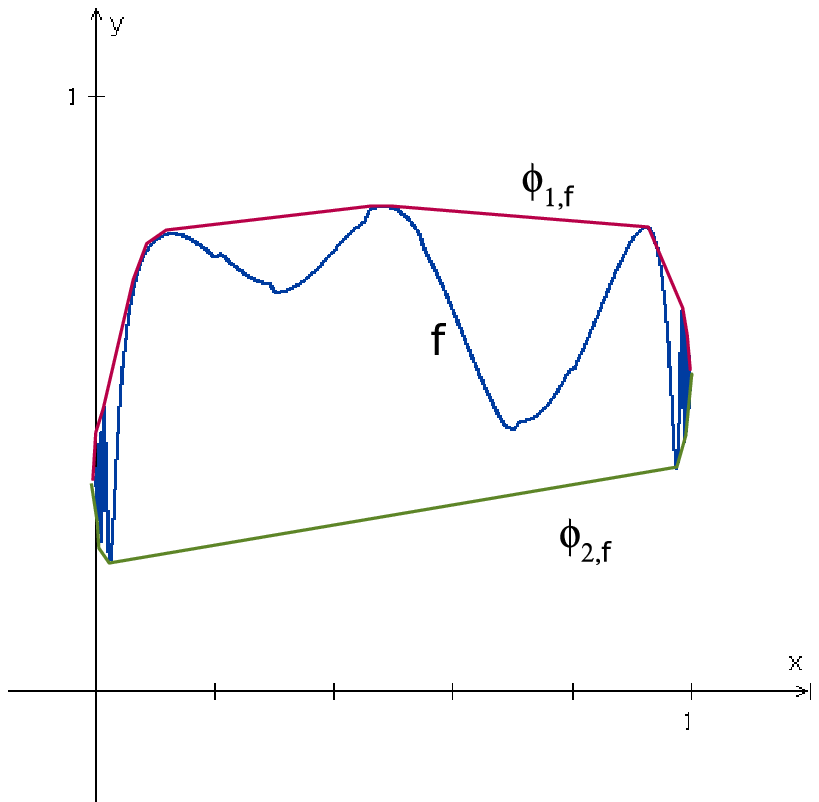}
    \caption{$f$, $ {\fff_ {1,f}}$ and $ {\fff_ {2,f}}$ in 1D}
     \label{fig1}
  \end{center}
\end{figure}

\section{Introduction}

We started with J. Nagy our study of multifractal properties of typical/generic functions
in \cite{BUC} where multifractal properties of generic monotone
functions on $[0,1]$ were treated.
The higher dimensional version of this question was considered
in \cite{BSJMAA} where  with S. Seuret  we investigated the H\"older spectrum of   functions monotone  in several variables. 
In  \cite{BuS2} we also showed that typical Borel measures on $[0,1]^d$ satisfy a multifractal formalism.
Multifractal properties of typical convex continuous functions defined on $ {[0,1]^d}$
are discussed in \cite{BStcv}.

In \cite{[BruHa]} the convex hull of typical continuous functions $f\in C[0,1]$ is considered by A. M. Bruckner and J. Haussermann. In this case the boundary of this convex hull
decomposes into two functions (in our notation) $ {\fff_ {1,f}}$ and $ {\fff_ {2,f}}$ see Figure \ref{fig1}.
The upper one, $ {\fff_ {1,f}}$ is concave the lower one $ {\fff_ {2,f}}$ is convex.
It is shown that for the typical $f$ these functions are continuously
differentiable on $(0,1)$ and at the endpoints they have infinite derivatives.
The aim of our paper is to describe the multifractal spectrum of these functions
in the multidimensional setting. That is, 
generic/typical continuous functions $f$ in $C {[0,1]^d}$ are considered in the sense of Baire category. The topology on $C {[0,1]^d}$ is
the supremum metric. 
 We also prove the multidimensional version 
of the above mentioned  results of A. M. Bruckner and J. Haussermann.

The convex hull of the graph of $f\in  {C[0,1]^d}$ is denoted by $H_{f}$,
that is, $H_{f}=\text{  convex hull of  }\{(\bbx,f(\bbx)):\bbx\in {[0,1]^d}  \}.$

We are interested in two functions: $\fff_{1,f}(\bbx)=\max \{y:(\bbx,y)\in H_{f} \}$ is the function on the top of $H_{f}$, and
$\fff_{2,f}(\bbx)=\min \{y:(\bbx,y)\in H_{f} \}$ is the function on the bottom of $H_{f}$. On Figure \ref{fig1} these functions are illustrated in 
dimension one.

The points where $f$ and $\fff_{i,f}$ coincide are denoted by $E_{i,f}=
\{\bbx: \fff_{i,f}(\bbx)=f(\bbx)\}$, $i=1,2$.

The H\"older exponent and singularity spectrum  for a locally bounded function is defined as follows.
 
\begin{definition}
Let $f \in L^\infty( {[0,1]^ {d}})$. For $h\geq 0$ and ${\mathbf {x}}\in  {[0,1]^ {d}}$,
the function $f$  belongs to $C^h_{{\mathbf {x}}}$ if there are a polynomial $P$ of degree less than $[h]$ and a constant $C$ such that,  for ${\mathbf {x}}'$ close to  ${\mathbf {x}}$,
\begin{equation}
 \label{defpoint}
|f({\mathbf {x}}') - P({\mathbf {x}}'-{\mathbf {x}})| \leq C |{\mathbf {x}}' -{\mathbf {x}}|^h.
\end{equation}
The pointwise  {H\"older} exponent of $f$ at ${\mathbf {x}}$ is $h_f({\mathbf {x}}) =  \sup\{h\geq 0: \ f\in C^h_{{\mathbf {x}}} \}.$ \end{definition}

\begin{definition}The
 singularity spectrum of  $f$  is defined by 
 $$d_f(h)=\dim_{H}  {\mathbf E}_f^{h} , \ \ \text{  where  }  {\mathbf E}_f^{h} =\{{\mathbf {x}}: h_f({\mathbf {x}})=h\}.$$
  
Here $\dim_{H}$ denotes the Hausdorff dimension, and $\dim \emptyset = -\infty$ by convention.

 We will also use the sets
 \begin{equation}
  \label{defep}
  {\mathbf E}_f^{h,\leq} =\{{\mathbf {x}}: h_{f}({\mathbf {x}})\leq h  \} \supset  {\mathbf E}_f^{h} \text{ and }{\mathbf E}_f^{h,<} =\{{\mathbf {x}}: h_{f}({\mathbf {x}})< h  \} .
  \end{equation}
  \end{definition}

The faces of $ {[0,1]^d}$ are $$F_{0,j}=\{({x_{1},...,x_{j-1},0,x_{j+1},...,x_{d}})\in {[0,1]^d} \}$$
and
$$F_{1,j}=\{({x_{1},...,x_{j-1},1,x_{j+1},...,x_{d}})\in {[0,1]^d} \}.$$
 Then $\dd( {[0,1]^d})=\cup_{i=1}^{2}\cup_{j=1}^{d}F_{i,j}$.

  The main result of our paper is the following theorem:
\begin{theorem}\label{*30th}
There exists a dense $G_{\ddd}$ set $\cag\sse {C[0,1]^d}$ such that for every $f\in\cag$
for $i=1,2$
\begin{itemize}
\item $ {\fff_ {i,f}}$ is continuously differentiable on $ {(0,1)^d}$;
\item if $\bbx\in\dd( {[0,1]^d})$ then
\begin{equation}\label{*30c}
h_{{\fff_ {i,f}}}(\bbx)=0
\end{equation}
hence $d_{{\fff_ {i,f}}}(0)=d-1$ and ${\mathbf E}_{\fff_{i,f}}^{0}\cap(0,1)^{d}=\ess $;
\item $d_{{\fff_ {i,f}}}(1)=d-1$;
\item $d_{{\fff_ {i,f}}}(+\oo)=d$;
\item $d_{{\fff_ {i,f}}}(h)=-\oo$, that is, ${\mathbf E}_f^{h}=\ess$ for $h\in(0,+\oo)\sm \{1 \}$;
\item for $j=1,...,d$ if $\bbx\in F_{0,j}$ then
\begin{equation}\label{*30b}
\dd_{j,+} {\fff_ {i,f}}(\bbx)=(-1)^{i+1}(+\oo)
\end{equation}
if $\bbx\in F_{1,j}$ then
\begin{equation}\label{*30a}
\dd_{j,-} {\fff_ {i,f}}(\bbx)=(-1)^{i}(+\oo);
\end{equation}
\item $\dim_{H}E_{i,f}=0$.
\end{itemize}
\end{theorem}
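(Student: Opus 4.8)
**Proof proposal for Theorem~\ref{*30th}**

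The plan is to construct the dense $G_\ddd$ set $\cag$ as a countable intersection of open dense sets, each designed to force one of the listed properties. The backbone is a comparison scheme: if $P$ is a polynomial (or more generally a piecewise-linear-on-a-simplex, "tent-like" function) with $P\le f$ on $[0,1]^d$ and $P=f$ on a controlled finite set of "touching" points, then $P\le\fff_{2,f}$, and near a touching point where the graph of $\fff_{2,f}$ is pinched between the graph of $P$ and a steep cone coming from $f$ itself, one reads off the local Hölder behaviour of $\fff_{2,f}$. So the first block of work is purely about functions $f$ that are perturbations of such "polyhedral sawtooth" functions: I would fix a dense countable family of finite configurations of base points and slopes, and for each configuration and each rational $\epsilon>0$ define the open set of $f$ that, up to sup-distance $<\epsilon$, realizes the desired pinching at those base points with the desired steepness. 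Density follows because an arbitrary $g\in C[0,1]^d$ can be approximated first by a smooth function and then modified to a sawtooth-type graph whose convex hull from below is a piecewise-linear function touching $g$ only on a tiny (dimension-zero) set; openness follows from the fact that the relevant strict inequalities ($P<f$ off the touching set, steep cones fitting under the graph, the boundary derivative conditions \eqref{*30b}--\eqref{*30a}) survive small sup-perturbations, using compactness to get uniform gaps.

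With that machinery, the individual bullets are handled as follows. For the $C^1$ statement on $(0,1)^d$: a convex function is differentiable off a set where its graph contains a segment, and being an exposed boundary face of $H_f$ at an interior point forces, for the typical $f$, that the supporting hyperplane is unique and varies continuously — I would put this into a separate generic condition stating that no segment of positive length and no "corner" of $\fff_{2,f}$ survives in the interior, which is an open dense condition by the sawtooth approximation (one can always add a small bump to break a would-be flat piece). The boundary bullets \eqref{*30c}, \eqref{*30b}, \eqref{*30a} come from arranging that $f$ itself is very steep near $\partial([0,1]^d)$ pointing into the cube (e.g. $f$ dips sharply on a thin collar), forcing $\fff_{2,f}$ to agree with $f$ on $\partial([0,1]^d)$ with an infinite inward one-sided derivative — hence $h_{\fff_{2,f}}\equiv 0$ on the boundary; since $\partial([0,1]^d)$ has Hausdorff dimension $d-1$ this gives $d_{\fff_{2,f}}(0)=d-1$, and the $C^1$ property on $(0,1)^d$ gives ${\mathbf E}_{\fff_{i,f}}^{0}\cap(0,1)^d=\ess$. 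The spectrum bullets $d_{\fff_{i,f}}(1)=d-1$ and $d_{\fff_{i,f}}(+\oo)=d$ and emptiness for $h\in(0,+\oo)\sm\{1\}$: on $(0,1)^d$ the function $\fff_{2,f}$ is $C^1$, so its Hölder exponent is $\ge 1$ everywhere there; it equals exactly $1$ precisely on the $(d-1)$-dimensional ridge set where the gradient is Lipschitz-but-not-better (arranged, via the sawtooth skeleton, to be a finite union of smooth hypersurfaces — the "creases" of the polyhedral skeleton, whose perturbations one keeps under control), and it equals $+\oo$ on the flat facets of the convex hull, which generically have full measure / full dimension $d$ in the cube (since the convex hull from below of a wildly oscillating graph is locally affine on a large set); no exponent strictly between $0$ and $1$, or strictly bigger than $1$ but finite, can occur for a convex $C^1$ function whose gradient is piecewise smooth with only the above two regimes. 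Finally $\dim_H E_{i,f}=0$ is built directly into the construction: each approximating $f$ touches its convex hull only on a finite set, and one takes a further generic condition (again open dense by compactness) bounding the "touching set" in $\epsilon$-neighbourhoods, so in the limit $E_{i,f}$ has a countable cover by arbitrarily small balls, giving Hausdorff dimension $0$.

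For $\fff_{1,f}$ everything is symmetric: replace $f$ by $-f$, note $\fff_{1,f}=-\fff_{2,-f}$, and the map $f\mapsto -f$ is a homeomorphism of $C[0,1]^d$, so a generic condition for $\fff_{2,\cdot}$ pulls back to a generic condition for $\fff_{1,\cdot}$; intersect the two $G_\ddd$ sets. The main obstacle, and where the real work lies, is the sharp analysis of the ridge set, i.e.\ proving that generically ${\mathbf E}_{\fff_{i,f}}^{1}$ is exactly $(d-1)$-dimensional and not larger — one must rule out the a priori possibility that the set of points where the $C^1$ gradient fails to be differentiable is fat. This requires showing that, generically, the "non-affine part" of the convex hull sits on a lower-dimensional structure: the boundary of $H_f$ from below is, near any interior point, either affine or the envelope of a finite family of supporting hyperplanes, and the transition set between adjacent facets is a finite union of $(d-1)$-dimensional polyhedral pieces which perturb into $(d-1)$-dimensional Lipschitz graphs. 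Controlling this uniformly under perturbation — so that the relevant conditions are genuinely open — and simultaneously keeping the flat part of dimension $d$ and the touching set of dimension $0$, is the delicate combinatorial-geometric heart of the argument; I expect it to occupy the bulk of the paper, with the Baire-category packaging being comparatively routine once the one-configuration lemmas are in place.
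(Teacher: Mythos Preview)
Your overall strategy---piecewise-linear ``sawtooth'' approximations whose convex hulls touch the graph on finite sets, analysing the flat facets (where $h=+\infty$) versus the ridge/fold set between them (where $h=1$), and packaging everything via Baire category---is essentially the paper's. The symmetry $f\mapsto -f$ to pass between $\fff_{1,f}$ and $\fff_{2,f}$ is also what the paper does implicitly.

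There are, however, two genuine gaps.

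\textbf{Boundary behaviour.} You write that making $f$ steep on a collar ``forces $\fff_{2,f}$ to agree with $f$ on $\partial([0,1]^d)$''. This is false for $d\ge 2$: if $\bbx_0$ lies in the relative interior of a face $F_{0,j}$, then $\fff_{i,f}(\bbx_0)$ is a convex combination of values $f(\bbx_k)$ with $\bbx_k\in F_{0,j}$ (since $F_{0,j}$ is itself a face of the cube), and there is no reason for this to equal $f(\bbx_0)$. The paper's Lemma~\ref{*2lem} never makes that claim: it uses Carath\'eodory (Lemma~\ref{*1clem}) to write $\fff_{1,f}(\bbx_0)=\sum p_k f(\bbx_k)$ with all $\bbx_k\in F_{0,j}$, perturbs by $\frac{1}{n+m}\,\dist(\bbx,F_{0,j})^{1/m}$, and then estimates $\fff_{1,f}(\bbx_0+t\bbe_j)-\fff_{1,f}(\bbx_0)$ from below by passing through the same convex combination at the shifted points $\bbx_k+t\bbe_j$. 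The steepness of $f$ transfers to $\fff_{1,f}$ via this averaging, not via pointwise coincidence.

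\textbf{The $C^1$ property.} Your description is partly misstated and the mechanism is missing. The obstruction to differentiability of a convex function is corners (non-unique supporting hyperplanes), not flat segments; the flat facets of $\fff_{i,f}$ are exactly where $h=+\infty$ and must be kept, not ``broken''. More importantly, ``add a bump to break a corner'' at one approximation stage does not prevent corners from reappearing at finer scales, so this is not by itself a $G_\delta$ condition yielding $C^1$ in the limit. The paper's Lemma~\ref{*6lem} proceeds quite differently: it approximates $f$ by $C^\infty$ functions $f_{n_m}$ with second partials bounded by $M_{n_m}$, applies Taylor's formula at each Carath\'eodory touching point $\bbx_k$, and derives a uniform quantitative bound
\[
\frac{\fff_{1,f}(\bbx)-\fff_{1,f}(\bbx-h_m\bbe_1)}{h_m}-\frac{\fff_{1,f}(\bbx+h_m\bbe_1)-\fff_{1,f}(\bbx)}{h_m}\le \frac{5}{m}
\]
valid for every $f$ in a ball around $f_{n_m}$. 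It is this explicit estimate, surviving sup-perturbation, that makes the $C^1$ conclusion hold on a genuine dense $G_\delta$ set.
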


\section{Notation and preliminary results}

The open ball with center $\bbx$ and of radius $r>0$ is denoted by $B(\bbx,r)$. We use similar notation
for open neighborhoods of subsets, for example if $A\sse  {\ensuremath {\mathbb R}}^{d}$ then
$B(A,r)=\{\bbx\in  {\ensuremath {\mathbb R}}^{d}: \dist(\bbx,A)<r \}.$

For subsets $A\sse  {\ensuremath {\mathbb R}}^{d}$ we denote the diameter of the set $A$
by $|A|$ while $\dd A$ denotes is boundary.

The $j$'th basis vector in $ {\ensuremath {\mathbb R}}^{d}$ is denoted by $\bbe _{j}=(0,...,0,\underset{\underset{j}{\uparrow}}{1},0,...,0).$

In our proofs we use a standard fixed countable dense set 
$\{f_{n} \}_{n=1}^{\oo}$ in $ {C[0,1]^d}$.
We assume that the functions $f_{n}$ are in $C^{\oo} {[0,1]^d}$.
(Taking an arbitrary countable dense set $\{\tf_{n}\}$ in $C {[0,1]^d}$ by using a mollifier function it is easy to obtain such a dense set of $C^{\oo}$ functions.)

\begin{definition}\label{*1bbdef}
We say that $f: {[0,1]^d}\to  {\ensuremath {\mathbb R}}$ is {\it piecewise linear}
 if there is a partition $Z_{j}$, $j=1,...,n_{\zzz}$
 of $ {[0,1]^d}$ into simplices such that for each $j$ the set
 $\{(\bbx,f(\bbx)):\bbx \in Z_{j} \}$ is the subset of a hyperplane
 in $ {\ensuremath {\mathbb R}}^{d+1}$.\\
 We say that $f$ is {\it independent piecewise linear} if it is piecewise
 linear and if $V$ denotes the collection of the vertices
 of the simplices $Z_{j}$, $j=1,...,n_{\zzz}$ for a suitable
 partition then 
 \begin{itemize}
 \item from
 $\bbx_{1},...,\bbx_{k}\in V$, $\bbx_{i}\not=\bbx_{j}$, $i\not=j$,
 the points 
$(\bbx_{j},f(\bbx_{j}))$, $j=1,...,k$ are on the same $d$-dimensional hyperplane in $ {\ensuremath {\mathbb R}}^{d+1}$ it
follows that $k\leq d+1$,
\item from
 $\bbx_{1},...,\bbx_{k}\in V\cap  {(0,1)^d}$, $\bbx_{i}\not=\bbx_{j}$, $i\not=j$,
 the points 
$\bbx_{j}$, $j=1,...,k$ are on the same $(d-1)$-dimensional hyperplane in $ {\ensuremath {\mathbb R}}^{d}$ it
follows that $k\leq d$.
\end{itemize}
 \end{definition}

The second assumption in the above defintion is void in the one dimensional case since the zero dimensional
hyperplanes are just points.

 \begin{lemma}\label{*1clem}
 Suppose $f\in  {C[0,1]^d}$.
 For any $ {{\mathbf x}_0}\in {[0,1]^d}$ there exist $ {{\mathbf x}_i}\in E_{1,f},$ $i=1,...,d+1$
 and $p_{i}\geq 0$, ${\sum_{i=1}^{d+1}} p_{i}=1$, such that $ {{\mathbf x}_0}=\sum_{i=1}^{d+1} p_{i}{{\mathbf x}_i}$ and $$ {\fff_ {1,f}}( {{\mathbf x}_0})=\sum_{i=1}^{d+1} p_{i} {\fff_ {1,f}}( {{\mathbf x}_i})=
 \sum_{i=1}^{d+1} p_{i}f( {{\mathbf x}_i}).$$
 \end{lemma}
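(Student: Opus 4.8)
The plan is to use Carathéodory's theorem for convex hulls together with the observation that the extreme points of $H_f$ lie on the graph of $f$. First I would recall that $\fff_{1,f}$ is the upper boundary of $H_f$, and since $H_f$ is the convex hull of the compact set $G_f=\{(\bbx,f(\bbx)):\bbx\in[0,1]^d\}\subset\R^{d+1}$, Carathéodory's theorem gives that every point of $H_f$ is a convex combination of at most $d+2$ points of $G_f$. I want to improve $d+2$ to $d+1$ by exploiting that $({{\mathbf x}_0},\fff_{1,f}({{\mathbf x}_0}))$ is a \emph{boundary} point of $H_f$, in fact a point of the ``upper'' face; a supporting hyperplane argument at that point reduces the ambient dimension by one, so Carathéodory applied within the $d$-dimensional supporting hyperplane (or its intersection with $H_f$) yields a representation as a convex combination of at most $d+1$ points of $G_f$.

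The key steps in order: (1) Set $v_0=({{\mathbf x}_0},\fff_{1,f}({{\mathbf x}_0}))$. By definition of $\fff_{1,f}$ this is the highest point of $H_f$ over ${{\mathbf x}_0}$, so $v_0$ is not in the interior of $H_f$; pick a supporting hyperplane $\Pi$ of $H_f$ at $v_0$. Since $f$ is continuous and $H_f$ is a full-dimensional compact convex body whenever $f$ is nonconstant (and the statement is trivial otherwise), $\Pi$ can be taken to be non-vertical, i.e. a graph $y=\ell(\bbx)$ of an affine function $\ell$ with $\ell\geq f$ everywhere on $[0,1]^d$ and $\ell({{\mathbf x}_0})=\fff_{1,f}({{\mathbf x}_0})$. (2) The exposed face $F=H_f\cap\Pi$ is a compact convex subset of the $d$-dimensional affine space $\Pi\cong\R^d$, and it contains $v_0$. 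Moreover every extreme point of $F$ is an extreme point of $H_f$, hence lies in $G_f$ (extreme points of a convex hull of a compact set are among the generating set), and for such a point $(\bbx,f(\bbx))$ we have $f(\bbx)=\ell(\bbx)=\fff_{1,f}(\bbx)$, so $\bbx\in E_{1,f}$. (3) Apply Carathéodory's theorem inside the $d$-dimensional set $F$: $v_0$ is a convex combination $v_0=\sum_{i=1}^{d+1}p_i v_i$ of at most $d+1$ extreme points $v_i=({{\mathbf x}_i},f({{\mathbf x}_i}))$ of $F$, with $p_i\geq0$, $\sum p_i=1$. Reading off the first $d$ coordinates gives ${{\mathbf x}_0}=\sum p_i{{\mathbf x}_i}$, and the last coordinate gives $\fff_{1,f}({{\mathbf x}_0})=\sum p_i f({{\mathbf x}_i})$; since $f({{\mathbf x}_i})=\fff_{1,f}({{\mathbf x}_i})$ for each $i$ (as ${{\mathbf x}_i}\in E_{1,f}$ and $\ell\geq f\geq$ nothing forces it, but $\ell({{\mathbf x}_i})=f({{\mathbf x}_i})$ and $f\le\fff_{1,f}\le\ell$ forces equality), all three displayed quantities agree. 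Pad with zero weights if fewer than $d+1$ points occur.

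The main obstacle I anticipate is the bookkeeping around degenerate cases and the precise justification that extreme points of the exposed face $F$ are extreme points of $H_f$ that moreover lie in $G_f$ rather than merely in its closure — here one uses that $G_f$ is already compact, so $\mathrm{conv}(G_f)$'s extreme points are contained in $G_f$ (Milman's converse to Krein–Milman, or the elementary fact for convex hulls of compact sets). A secondary subtlety is ensuring the supporting hyperplane at $v_0$ is non-vertical: if it were vertical it would contain a whole segment $\{{{\mathbf x}_0}\}\times[a,b]$ only when ${{\mathbf x}_0}\in\dd([0,1]^d)$ pushes the projection to the boundary, but one can argue that $\fff_{1,f}$, being the max over the vertical fiber, always admits the graph of an affine minorant of... more carefully: the epigraph-type structure guarantees that the set $\{(\bbx,y):y\le\fff_{1,f}(\bbx)\}\supset H_f$ and the upper graph point has a non-vertical supporting hyperplane because $\fff_{1,f}$ is concave and finite on $[0,1]^d$, so its superdifferential (one-sided on the boundary) is nonempty. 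This last point is the only place requiring genuine care; everything else is a routine application of Carathéodory. An entirely analogous argument with a lower supporting hyperplane handles $\fff_{2,f}$ and $E_{2,f}$.
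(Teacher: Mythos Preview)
Your approach is correct in spirit but takes a longer and slightly more delicate route than the paper. The paper simply invokes the boundary version of Carath\'eodory's theorem (the point $(\bbx_0,\fff_{1,f}(\bbx_0))$ lies on $\partial H_f$, so it is a convex combination of at most $d+1$ points of $G_f$) to obtain $\bbx_i\in[0,1]^d$, $p_i\ge 0$ with $\sum p_i\bbx_i=\bbx_0$ and $\sum p_i f(\bbx_i)=\fff_{1,f}(\bbx_0)$; then it argues in one line that each $\bbx_i$ with $p_i>0$ lies in $E_{1,f}$: if $f(\bbx_{i'})<\fff_{1,f}(\bbx_{i'})$ for some $i'$, replace $f(\bbx_{i'})$ by $\fff_{1,f}(\bbx_{i'})$ in the combination to get a point of $H_f$ strictly above $(\bbx_0,\fff_{1,f}(\bbx_0))$, contradicting the definition of $\fff_{1,f}$. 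No hyperplane is ever chosen.

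Your version reproves the boundary Carath\'eodory via a supporting hyperplane $\Pi$ and then tries to extract $\bbx_i\in E_{1,f}$ \emph{for free} from the non-verticality of $\Pi$ (via $f(\bbx_i)=\ell(\bbx_i)\ge\fff_{1,f}(\bbx_i)\ge f(\bbx_i)$). That deduction is valid when $\Pi$ is non-vertical, but your justification that a non-vertical $\Pi$ always exists has a gap: the superdifferential of a finite concave function on $[0,1]^d$ can be empty at boundary points. For example, with $d=1$ and $f(x)=\sqrt{x}$ on $[0,1]$ one has $\fff_{1,f}=f$, and at $\bbx_0=0$ there is no affine $\ell\ge f$ with $\ell(0)=0$; the only supporting line of $H_f$ at $(0,0)$ from above is vertical. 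The easiest repair is to drop the insistence on non-verticality: take any supporting hyperplane $\Pi$, apply Carath\'eodory inside the $\le d$-dimensional face $F=H_f\cap\Pi$ to get $v_0=\sum p_i(\bbx_i,f(\bbx_i))$ with $\bbx_i$ coming from extreme points of $H_f$, and then run the paper's one-line contradiction to conclude $\bbx_i\in E_{1,f}$. With that patch your argument is complete and essentially equivalent to the paper's, just with the boundary Carath\'eodory step spelled out rather than cited.
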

 
\begin{remark}
We remark that in the above lemma some $p_{i}$'s can equal zero,
or some $ {{\mathbf x}_i}$'s coincide.
\end{remark}
 
 \begin{proof}
 By Carath\'eodory's theorem (see for example \cite{Eck}) from $( {{\mathbf x}_0}, {\fff_ {1,f}}( {{\mathbf x}_0}))\in H_{f}$ it follows that one can find (not necessarily different) $ {{\mathbf x}_i}\in {[0,1]^d}$, $p_{i}\geq 0$, $i=1,...,d+1$, ${\sum_{i=1}^{d+1}} p_{i}=1$
  such that ${\sum_{i=1}^{d+1}} p_{i} {{\mathbf x}_i}= {{\mathbf x}_0}$, ${\sum_{i=1}^{d+1}} p_{i} f( {{\mathbf x}_i})=
   {\fff_ {1,f}}( {{\mathbf x}_0}).$ If for an $i'$ we had $f( {{\mathbf x}_ {i'}})< {\fff_ {1,f}}( {{\mathbf x}_ {i'}})$
  then letting $$\ds y'=\Big (\sum_{i=1,\ i\not = i'}^{d+1}p_{i}f( {{\mathbf x}_i})\Big )+p_{i'} {\fff_ {1,f}}( {{\mathbf x}_ {i'}})$$ we would obtain $( {{\mathbf x}_0},y')\in H_{f}$
  contradicting that $ {\fff_ {1,f}}( {{\mathbf x}_0})=\max\{y:( {{\mathbf x}_0},y)\in H_{f} \}$.
 \end{proof}
 
 \begin{lemma}\label{*2lem}
 There exists a dense $G_{\ddd}$ set $\cag_{0,d}\sse  {C[0,1]^d}$  such that 
for every $f\in\cag_{0,d}$ and for every $ {{\mathbf x}_0}\in F_{0,d}$
\begin{equation}\label{*2a}
\dd_{d,+} {\fff_ {1,f}}( {{\mathbf x}_0})=+\oo,  \dd_{d,+} {\fff_ {2,f}}( {{\mathbf x}_0})=-\oo \text{  and  }
\end{equation} 
\begin{equation}\label{*2b}
h_{\fff_{i},f}( {{\mathbf x}_0})=0 \text{  for  }i=1,2.
\end{equation}
 \end{lemma}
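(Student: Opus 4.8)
**Plan of proof for Lemma \ref{*2lem}.**

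The strategy is the standard Baire-category recipe: build $\cag_{0,d}$ as a countable intersection of dense open sets, where each open set forces the two derivative/H\"older conditions on an $\varepsilon$-net of boundary points with a quantitative margin. For $M\in\N$ and a finite $\delta$-dense set $D_\delta=\{\mathbf y_1,\dots,\mathbf y_{N}\}\subset F_{0,d}$ (I will actually want these $\delta$-nets for all rational $\delta$, so index by rationals), define $\cao_{M,\delta}$ to be the set of $f\in C[0,1]^d$ for which, for each $\mathbf y_k\in D_\delta$, there is a point $\mathbf z$ with $\mathbf z-\mathbf y_k$ a small positive multiple of $\bbe_d$ (say $|\mathbf z-\mathbf y_k|\le 1/M$) and
\[
\fff_{1,f}(\mathbf z)\ \ge\ \fff_{1,f}(\mathbf y_k)+M\,|\mathbf z-\mathbf y_k|,
\qquad
\fff_{2,f}(\mathbf z)\ \le\ \fff_{2,f}(\mathbf y_k)-M\,|\mathbf z-\mathbf y_k|,
\]
together with the ``reverse'' inequality needed for the H\"older statement, namely that there exist points $\mathbf z'$ arbitrarily close to $\mathbf y_k$ (again I can ask $|\mathbf z'-\mathbf y_k|\le 1/M$) with $|\fff_{i,f}(\mathbf z')-\fff_{i,f}(\mathbf y_k)|\ge M|\mathbf z'-\mathbf y_k|^{1/M}$, which as $M\to\infty$ kills every positive H\"older exponent. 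Each $\cao_{M,\delta}$ is open because $f\mapsto\fff_{i,f}$ is continuous from $(C[0,1]^d,\|\cdot\|_\infty)$ to itself (a small uniform perturbation of $f$ moves $H_f$, hence its upper/lower envelopes, by a small uniform amount) and the defining conditions are strict inequalities at finitely many points. The set $\cag_{0,d}=\bigcap_{M}\bigcap_{\delta\in\Q^+}\cao_{M,\delta}$ is then a dense $G_\delta$ once density is shown, and for $f\in\cag_{0,d}$ and \emph{any} $\mathbf x_0\in F_{0,d}$ one approximates $\mathbf x_0$ by net points $\mathbf y_k\to\mathbf x_0$: the margins $M|\mathbf z-\mathbf y_k|$ dominate the oscillation of the (continuous, in fact one expects $C^1$-on-the-interior) functions $\fff_{i,f}$ near $\mathbf y_k$, forcing $\dd_{d,+}\fff_{1,f}(\mathbf x_0)=+\oo$, $\dd_{d,+}\fff_{2,f}(\mathbf x_0)=-\oo$ and $h_{\fff_{i,f}}(\mathbf x_0)=0$; I will need to be slightly careful that ``steep near a nearby net point'' transfers to ``steep at $\mathbf x_0$ itself'', which is why the perturbation in the density step must make the envelope genuinely steep on a whole small neighborhood in $F_{0,d}$, not at isolated points.

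The heart of the argument is density of each $\cao_{M,\delta}$: given arbitrary $g\in C[0,1]^d$ and $\eta>0$, I must produce $f$ with $\|f-g\|_\infty<\eta$ lying in $\cao_{M,\delta}$. Here I use the standing countable dense set of smooth functions, but more to the point I perturb $g$ near the face $F_{0,d}$ by adding a spike that sits \emph{above} the graph near the $\mathbf y_k$'s (for the $\fff_{1,f}$ condition) and, separately near other small balls, one that sits \emph{below} (for the $\fff_{2,f}$ condition); one takes these bumps of height $<\eta/2$ but with very steep sides, e.g. proportional to $\mathrm{dist}(\cdot,F_{0,d})^{1/M}$-type or even linear-with-huge-slope profiles truncated at height $\eta/2$. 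The point is that a tall, narrow bump whose tip touches $\fff_{1,f}$'s current value forces the upper convex envelope to acquire, just above $\mathbf y_k$, a chord of slope $\ge M$ in the $\bbe_d$ direction, because the envelope must lie above the tip while at $\mathbf y_k\in\dd([0,1]^d)$ it cannot be pushed up (the graph of $f$ there is essentially unchanged and the face is extremal). Simultaneously one arranges, by using profiles that are steeper than any power (a $t\mapsto 1/\log(1/t)$-type modulus, or simply a sequence of linear pieces with slopes $\to\infty$) that the envelope has H\"older exponent $0$ at $\mathbf y_k$. I then note the perturbation can be localized: choosing the supports of the bumps disjoint and away from the rest of $[0,1]^d$, and away from previously-imposed conditions, lets the same $f$ satisfy the requirements for all $N$ net points at once, and lets later $\cao_{M',\delta'}$ conditions be met without destroying earlier ones (disjoint supports, shrinking heights).

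The main obstacle I anticipate is \emph{controlling the convex hull under the perturbation} — i.e.\ verifying that adding a localized steep bump to $g$ near a boundary point actually produces the claimed steepness of $\fff_{1,f}$ (resp.\ $\fff_{2,f}$) at that point, and that it does not inadvertently spoil the inequality at a different net point through a long chord of $H_f$ reaching across the cube. This is where Lemma \ref{*1clem} (Carath\'eodory representation of boundary-envelope points as convex combinations of points of $E_{1,f}$) is the right tool: it says $\fff_{1,f}(\mathbf y_k)$ is witnessed by graph points, so if the bump tip is the \emph{unique} nearby high point, the supporting hyperplane at $(\mathbf y_k,\fff_{1,f}(\mathbf y_k))$ is pinned down and its slope in the $\bbe_d$ direction is forced to be large; and since $\mathbf y_k$ lies on the face $F_{0,d}=\{x_d=0\}$, only a one-sided (forward) derivative is meaningful, which is exactly what \eqref{*2a} asserts. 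The symmetric statement for $\fff_{2,f}$ is identical after reflecting $y\mapsto -y$ (with the analogue of Lemma \ref{*1clem} for $E_{2,f}$), and \eqref{*2b} follows since $h_{\fff_{i,f}}(\mathbf y_k)=0$ whenever the forward derivative is infinite, hence also at the limit point $\mathbf x_0$ by the same net-approximation as above.
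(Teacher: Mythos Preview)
Your proposal has the right Baire-category skeleton and you correctly flag Lemma \ref{*1clem} as the relevant tool, but there is a genuine gap in the transfer step, and it is exactly the point where the paper's argument diverges from yours.

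With localized bumps supported near finitely many net points $\mathbf{y}_k\in F_{0,d}$, you only control $\fff_{1,f}$ near those net points, and you then need to pass to an arbitrary $\bbx_0\in F_{0,d}$. You note this is delicate and propose making the envelope ``steep on a whole small neighborhood in $F_{0,d}$'', but localized bumps do not achieve that. The obstruction is precisely what Lemma \ref{*1clem} says: for $\bbx_0\in F_{0,d}$ the Carath\'eodory witnesses $\bbx_1,\dots,\bbx_{d+1}\in E_{1,f}$ are forced to lie on $F_{0,d}$ (their $d$-th coordinates are nonnegative and average to zero), but they may lie \emph{anywhere} on that face, far from every net point. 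Your perturbation can then vanish at each $\bbx_i+t\bbe_d$, and you obtain no lower bound on $\fff_{1,f}(\bbx_0+t\bbe_d)-\fff_{1,f}(\bbx_0)$. Continuity of $\fff_{1,f}$ along the face does not rescue this either, since its modulus of continuity is not small relative to $Mt$ once $t$ is tiny; and concavity gives inequalities in the wrong direction for the comparison you would need.

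The paper sidesteps the whole transfer issue by taking a perturbation that is \emph{constant along the face}: it sets $f_{n,m}(\bbx)=f_n(\bbx)+\frac{1}{n+m}\bigl(\dist(\bbx,F_{0,d})\bigr)^{1/m}$ and $\cag_{0,d}=\bigcap_m\bigcup_n B(f_{n,m},\ddd_{n,m})$. For any $f$ in this $G_\ddd$ set and \emph{any} $\bbx_0\in F_{0,d}$, one argues directly: the Carath\'eodory points $\bbx_i$ sit on $F_{0,d}$, so the perturbation vanishes there and $\fff_{1,f}(\bbx_0)\le\sum p_i f_{n_m}(\bbx_i)+\ddd_{n_m,m}$; on the other hand concavity plus $\fff_{1,f}\ge f\ge f_{n_m,m}-\ddd_{n_m,m}$ at the translated points $\bbx_i+t\bbe_d$ gives $\fff_{1,f}(\bbx_0+t\bbe_d)\ge\sum p_i f_{n_m}(\bbx_i+t\bbe_d)+\frac{1}{n_m+m}t^{1/m}-\ddd_{n_m,m}$, the cusp term appearing \emph{uniformly} because the perturbation depends only on $x_d$. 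Subtracting and choosing $t=t_m$ suitably yields both \eqref{*2a} and \eqref{*2b} at once, with no net and no limiting argument along the face.

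One further slip: your closing remark that ``$h_{\fff_{i,f}}(\mathbf{y}_k)=0$ whenever the forward derivative is infinite'' is false as stated (take $t\mapsto\sqrt{t}$: infinite right derivative at $0$, H\"older exponent $1/2$). The quantitative condition you wrote earlier, $|\fff_{i,f}(\mathbf{z}')-\fff_{i,f}(\mathbf{y}_k)|\ge M|\mathbf{z}'-\mathbf{y}_k|^{1/M}$ for all $M$, is what is actually needed; infinite derivative alone only gives $h\le 1$.
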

 
 \begin{proof}
 Without limiting generality we prove the statement for $ {\fff_ {1,f}}$.
 \\
 Suppose $ {{\mathbf x}_0}=(x_{0,1},...,x_{0,d-1},0)\in F_{0,d}$.
 By Lemma \ref{*1clem}
 there exist $ {{\mathbf x}_i}\in E_{1,f}\sse  {[0,1]^d}$, and $p_{i}\geq 0$, $i=1,...,d+1$,
  such that  ${\sum_{i=1}^{d+1}} p_{i}=1$,  
  \begin{equation}\label{*3a}
  {\sum_{i=1}^{d+1}} p_{i}\bbx_{i}=\bbx_{0} \text{  and  }
  {\sum_{i=1}^{d+1}} p_{i}f( {{\mathbf x}_i})={\sum_{i=1}^{d+1}} p_{i} {\fff_ {1,f}}( {{\mathbf x}_i})= {\fff_ {1,f}}( {{\mathbf x}_0}).
  \end{equation}
  This and $\bbx_{0}\in F_{0,d}$ imply 
  \begin{equation}\label{*3b}
   {{\mathbf x}_i}\in F_{0,d},\  i=1,...,d+1.
  \end{equation}
  Put $M_{n}=||f_{n}'||_{\oo}\geq |\dd_{d}f(\bbx)|$ for all $\bbx\in {[0,1]^d}$.
  We also let
  \begin{equation}\label{*3c}
  f_{n,m}(\bbx)=f_{n}(\bbx)+\frac{1}{n+m}(\dist(\bbx,F_{0,d}))^{1/m}
  \text{  and  } {\delta_ {n,m}}=\frac{1}{(n+m)2^{(n+m)}}.
  \end{equation}
  It is clear that $G_{m}=\cup_{n=1}^{\oo}B(f_{n,m}, {\delta_ {n,m}})$ is dense and open
  in $ {C[0,1]^d}$ and $\cag_{0,d}\defeq \cap_{m=1}^{\oo}G_{m}$ is dense $G_{\ddd}$.
  Suppose $f\in\cag_{0,d}$. Then there exists a sequence $n_{m}$, $m=1,...$
 such that $f\in B(f_{n_m,m}, {\delta_ {n_m,m}})$.
Since $ {{\mathbf x}_i}\in F_{0,d}$ by \eqref{*3c} we have $$f( {{\mathbf x}_i})\leq f_{n_m,m}( {{\mathbf x}_i})+ {\delta_ {n_m,m}}=f_{n_{m}}( {{\mathbf x}_i})+ {\delta_ {n_m,m}},\ i=1,...,d+1.$$
Therefore, using \eqref{*3a}
$$ {\fff_ {1,f}}( {{\mathbf x}_0})\leq \Big ({\sum_{i=1}^{d+1}} p_{i}f_{n_{m}}( {{\mathbf x}_i})\Big )+ {\delta_ {n_m,m}}.$$
On the other hand, since ${\fff_ {1,f}}$ is concave
$$ {\fff_ {1,f}}( {{\mathbf x}_0}+t {{\mathbf e}_d})\geq  \Big ({\sum_{i=1}^{d+1}} p_{i} {\fff_ {1,f}}( {{\mathbf x}_i}+t {{\mathbf e}_d})\Big )\geq$$
$$ {\sum_{i=1}^{d+1}} p_{i}f( {{\mathbf x}_i}+t {{\mathbf e}_d})\geq  \Big ({\sum_{i=1}^{d+1}} p_{i}f_{{n_m},m}( {{\mathbf x}_i}+t {{\mathbf e}_d})\Big )- {\delta_ {n_m,m}}\geq$$
(using \eqref{*3b} and \eqref{*3c})
$$\geq  \Big ({\sum_{i=1}^{d+1}} p_{i}f_{n_m}( {{\mathbf x}_i}+t {{\mathbf e}_d})\Big )+\frac{1}{n_{m}+m}t^{1/m}- {\delta_ {n_m,m}}.$$
Taking difference
\begin{equation}\label{*5a}
 {\fff_ {1,f}}( {{\mathbf x}_0}+t {{\mathbf e}_d})- {\fff_ {1,f}}( {{\mathbf x}_0})\geq
\end{equation}
$$ \Big ({\sum_{i=1}^{d+1}} p_{i}(f_{n_m}( {{\mathbf x}_i}+t {{\mathbf e}_d})-f_{n_m}( {{\mathbf x}_i}))\Big )-2 {\delta_ {n_m,m}}+\frac{1}{n_{m}+m}t^{1/m}\geq$$
(by the Mean value Theorem and the choice of $M_{n}$)
$$\geq \frac{1}{n_{m}+m}t^{1/m}-2 {\delta_ {n_m,m}}-M_{n}t.$$
Choosing $t_{m}=2^{-(n_{m}+m)}$ by \eqref{*3c} we obtain
$$ {\delta_ {n_m,m}}=t_{m}\frac{1}{n_{m}+m}.$$
Hence, $$\limsup_{m\to\oo}\frac{{\fff_ {1,f}}( {{\mathbf x}_0}+t_{m} {{\mathbf e}_d})- {\fff_ {1,f}}( {{\mathbf x}_0})}{t_{m}^{\aaa}}=+\oo\text{  for any  }\aaa>0.$$
This implies \eqref{*2b}.

Taking $\aaa=1$ and using concavity of $ {\fff_ {1,f}}$ we also obtain
$\dd_{d,+} {\fff_ {1,f}}( {{\mathbf x}_0})=+\oo$. This implies \eqref{*2a}.
  \end{proof}

\begin{lemma}\label{*6lem}
There exists a dense open set $\cag_{1}\sse  {C[0,1]^d}$  such that 
for every $f\in \cag_{1}$ the functions $ {\fff_ {1,f}}$ and $ {\fff_ {2,f}}$ are both
continuously differentiable on $(0,1)^{d}$.
\end{lemma}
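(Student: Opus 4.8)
The plan is to produce $\cag_{1}$ via the Baire category theorem. First a reduction: since $\fff_{1,f}$ is concave and $\fff_{2,f}$ is convex on $[0,1]^{d}$, each is continuously differentiable on $(0,1)^{d}$ iff it is merely differentiable at every point of $(0,1)^{d}$, and for such a function differentiability at $\bbx$ fails exactly when the superdifferential $\partial^{+}\fff_{1,f}(\bbx)$, resp.\ the subdifferential $\partial^{-}\fff_{2,f}(\bbx)$, has positive diameter. Thus for $n,k\in\N$ let $B_{n,k}\sse C[0,1]^{d}$ be the set of $f$ for which there is $\bbx$ with $\dist(\bbx,\dd([0,1]^{d}))\ge 1/n$ and $\mathrm{diam}\,\partial^{+}\fff_{1,f}(\bbx)\ge 1/k$ or $\mathrm{diam}\,\partial^{-}\fff_{2,f}(\bbx)\ge 1/k$; then $\bigcup_{n,k}B_{n,k}$ is exactly the set of $f$ for which one of the two hulls is not $C^{1}$ on $(0,1)^{d}$, and it suffices to show each $B_{n,k}$ is closed and nowhere dense, since then $\cag_{1}\defeq\bigcap_{n,k}\bigl(C[0,1]^{d}\sm B_{n,k}\bigr)$ is a dense $G_{\ddd}$ set each of whose members satisfies the conclusion. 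Closedness is routine: from $\|\fff_{i,g}-\fff_{i,h}\|_{\infty}\le\|g-h\|_{\infty}$ one gets $\fff_{i,f_{j}}\to\fff_{i,f}$ uniformly when $f_{j}\to f$; if $f_{j}\in B_{n,k}$, pick witnessing points $\bbx_{j}$ in the compact set $\{\dist\ge 1/n\}$ and subgradients $p_{j},q_{j}$ with $|p_{j}-q_{j}|\ge 1/k$, use the uniform Lipschitz bound for the concave functions $\fff_{1,f_{j}}$ near $\bbx_{j}$ to keep $p_{j},q_{j}$ bounded, pass to convergent subsequences, and take limits in the supporting--hyperplane inequalities to obtain two distinct subgradients of the limit.

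The heart is nowhere density of $B_{n,k}$; as it is closed, it is enough to show that $\mathcal D\defeq\{f:\ \fff_{1,f},\fff_{2,f}\in C^{1}((0,1)^{d})\}$ is dense. The key observation is that if $g\in C^{1}([0,1]^{d})$ then $\fff_{1,g}$ is automatically differentiable at every contact point $\bbx^{*}$ (where $\fff_{1,g}(\bbx^{*})=g(\bbx^{*})$): a supporting hyperplane of the concave $\fff_{1,g}$ at $\bbx^{*}$ lies above $\fff_{1,g}\ge g$ while the tangent plane of $g$ at $\bbx^{*}$ lies below $g$ to first order, and these sandwich $\fff_{1,g}$, pinning $\partial^{+}\fff_{1,g}(\bbx^{*})$ to $\{\nabla g(\bbx^{*})\}$. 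Hence the only points of $(0,1)^{d}$ where $\fff_{1,g}$ can be singular lie in the open ``developable'' set $D=\{\fff_{1,g}>g\}$, over which the graph of $\fff_{1,g}$ is ruled by segments reaching $\dd D$. For $d=1$, $D$ is a union of open intervals on each of which $\fff_{1,g}$ is affine, so $\mathcal D\supseteq C^{1}[0,1]$ and density is immediate; this already recovers the one--dimensional statement of \cite{[BruHa]} (with Lemma~\ref{*2lem} supplying the endpoint behaviour).

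For $d\ge 2$ I would first approximate $f$ within $\varepsilon/2$ by an independent piecewise linear $P$ (Definition~\ref{*1bbdef}; these are dense). Then $H_{P}$ is a polytope, so $\fff_{1,P}$ and $\fff_{2,P}$ are piecewise affine with only finitely many ``ridges'' in $(0,1)^{d}$ --- the projections of the $(d-1)$--faces where two facets meet --- in general position thanks to the independence condition. I would then remove the ridges by a perturbation of $P$ supported in thin tubes around them: inside the developable region of $\fff_{1,P}$, where $\fff_{1,P}-P$ has a definite positive gap, add a small smooth strictly concave fillet along each upper ridge, and symmetrically subtract one along each lower ridge of $\fff_{2,P}$, choosing all fillets small enough that the total perturbation is $<\varepsilon/2$. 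The new hulls then follow these smooth fillets and are creaseless, so the resulting $g$ lies in $\mathcal D\cap B(f,\varepsilon)$, which proves density of $\mathcal D$ and hence nowhere density of each $B_{n,k}$.

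The main obstacle is this rounding step for $d\ge 2$: one must verify that inserting the fillet along a (simplicial) ridge genuinely makes the convex hull $C^{1}$ there, that no new crease appears where a fillet meets the old facets or where two rounded ridges abut along a $(d-2)$--face, and that the whole construction is achievable with arbitrarily small sup--norm perturbation. This needs a careful local analysis of the convex--hull operation near a ridge, and is precisely the place where the independence hypothesis of Definition~\ref{*1bbdef} does its work (its second clause being vacuous for $d=1$, which is why that case is immediate). The reduction and the Baire bookkeeping are the easy parts.
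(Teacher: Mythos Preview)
Your Baire framework (closed nowhere-dense bad sets $B_{n,k}$, density of the good set $\mathcal D$) is sound, and for $d=1$ your argument that $C^1[0,1]\subseteq\mathcal D$ is complete and elegant. But the route is quite different from the paper's, and for $d\ge 2$ the ``fillet'' construction is a genuine gap, as you yourself flag: you have not verified that rounding one ridge does not create another, nor that the convex hull actually follows the inserted cap.

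The paper avoids all of this. It does not argue via closed nowhere-dense sets; it builds $\cag_1$ directly. Fix a dense sequence $f_n\in C^\infty[0,1]^d$, let $M_n$ bound the second partials of $f_n$, set $\delta_{n,m}=(mM_n)^{-2}$, and put $\cag_1=\bigcap_m\bigcup_n B(f_n,\delta_{n,m})$. For $f\in\cag_1$ and $\bbx\in(0,1)^d$, write $\fff_{1,f}(\bbx)=\sum_i p_i f(\bbx_i)$ with $\bbx_i\in E_{1,f}$ via Lemma~\ref{*1clem}. A one-variable Taylor expansion of $f_{n_m}$ at each $\bbx_i$, together with $\|f-f_{n_m}\|<\delta_{n_m,m}=h_m^2$ where $h_m=(mM_{n_m})^{-1}$, gives
\[
\frac{\fff_{1,f}(\bbx)-\fff_{1,f}(\bbx-h_m\bbe_1)}{h_m}-\frac{\fff_{1,f}(\bbx+h_m\bbe_1)-\fff_{1,f}(\bbx)}{h_m}\le\frac{5}{m},
\]
which by concavity forces $\partial_1\fff_{1,f}(\bbx)$ to exist; continuity then follows from concavity by a routine argument. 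So the real input is a $C^2$ bound on the approximant plus Carath\'eodory, not any surgery on a polytope.

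This also tells you how to close your own argument without rounding. Specialising the paper's estimate to $f=g=f_n$ shows directly that for every $g\in C^2([0,1]^d)$ the envelopes $\fff_{1,g},\fff_{2,g}$ are $C^1$ on $(0,1)^d$; hence $\mathcal D\supseteq C^2([0,1]^d)$, which is the density you need. The piecewise-linear approximation, the independence hypothesis of Definition~\ref{*1bbdef}, and the fillet construction can all be dropped. (Incidentally, both constructions yield a dense $G_\delta$ rather than a dense open set, despite the wording of the lemma.)
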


\begin{remark}\label{*rem7}
This also implies that $h_{{\fff_ {i,f}}}(\bbx)\geq 1$
for any $\bbx\in(0,1)^{d}$, that is,
${\mathbf E}_{{\fff_ {i,f}}}^{1,<} \cap (0,1)^{d}=\ess$
for $f\in \cag_{1}$ and $i=1,2.$
\end{remark}

\begin{proof}
Again we start with $f_{n}\in C^{\oo} {[0,1]^d}$, $n=1,...$ a countable dense set
in $ {C[0,1]^d}$. This time we select $$M_{n}\geq 1 \text{  such that  }|\dd_{j}^{2}f_{n}|\leq M_{n},\ j=1,...,d.$$
We also put
\begin{equation}\label{*9a}
\ddd {_ {n,m}}= {\Big (} \frac{1}{m\cdot M_{n}}  {\Big )} ^{2}.
\end{equation} 
We put $\cag_{1}=\cap_{m}\cup_{n}B(f_{n},\ddd {_ {n,m}})$ and select
$f\in \cag_{1}.$ Then we can choose $n_{m}$  such that $f\in B( {f_ {n_m}}, {\delta_ {n_m,m}}).$

Suppose $\bbx\in  {(0,1)^d}$. We need to verify that $\dd_{j} {\fff_ {i,f}}(\bbx)$
exists and continuous for any $j=1,...,d$ and $i=1,2$.

Since the other cases are similar we can suppose that $i=1$, $j=1$.

Since $ {\fff_ {1,f}}(\bbx+t {{\mathbf e}_1})$ is a concave function in $t$ it is sufficient
to verify that its derivative exists at $t=0$ for any choice of $\bbx\in {(0,1)^d}$. This will imply that $\dd_{1} {\fff_ {1,f}}(\bbx+t {{\mathbf e}_1})$ is monotone decreasing
in $t$, without any jump discontinuities, hence  for a fixed $\bbx$ it is continuous as a function of one variable. In the end of this proof we will provide a standard
argument showing that from the concavity and continuity of ${\fff_ {1,f}}$
one can deduce that $\dd_{1} {\fff_ {1,f}}$ is continuous on $(0,1)^{d}$.

From now on $\bbx\in {(0,1)^d}$ is fixed.

By Carath\'eodory's theorem we can select $ {{\mathbf x}_i}\in E_{1,f}$, $p_{i}\geq 0$, $i=1,...,d+1$ such that ${\sum_{i=1}^{d+1}} p_{i}=1$, ${\sum_{i=1}^{d+1}} p_{i}\bbx_{i}=\bbx$ and
$$ {\fff_ {1,f}}(\bbx)={\sum_{i=1}^{d+1}} p_{i} {\fff_ {1,f}}( {{\mathbf x}_i})={\sum_{i=1}^{d+1}} p_{i}f( {{\mathbf x}_i}).$$
Suppose
\begin{equation}\label{*10a}
h_{m}=\frac{1}{m\cdot M_{n_{m}}}.
\end{equation}

By the one dimensional Taylor's formula one can find $c_{n_{m},i,\pm}$
 such that $|c_{n_{m},i,\pm}|< h_{m}$ and
 \begin{equation}\label{*11a}
  {f_ {n_m}} ( {{\mathbf x}_i}\pm h_{m} {{\mathbf e}_1})= {f_ {n_m}}( {{\mathbf x}_i})\pm \dd_{1} {f_ {n_m}}( {{\mathbf x}_i})h_{m}+
 \frac{\dd_{1}^{2} {f_ {n_m}}( {{\mathbf x}_i}+c_{n_{m},i,\pm} {{\mathbf e}_1})}{2!}h_{m}^{2}\geq
 \end{equation}
 $$ {f_ {n_m}}( {{\mathbf x}_i})\pm\dd_{1} {f_ {n_m}}( {{\mathbf x}_i})h_{m}-\frac{M_{n_{m}}}{2}h_{m}^{2}.$$
 This implies
 \begin{equation}\label{*11b}
  {\fff_ {1,f}}( {{\mathbf x}}\pm h_{m} {{\mathbf e}_1})\geq {\sum_{i=1}^{d+1}} p_{i} {\fff_ {1,f}}( {{\mathbf x}_i}\pm h_{m} {{\mathbf e}_1})\geq 
 \end{equation}
 $${\sum_{i=1}^{d+1}} p_{i}f( {{\mathbf x}_i}\pm h_{m} {{\mathbf e}_1})\geq  {\Big (} {\sum_{i=1}^{d+1}} p_{i} {f_ {n_m}}( {{\mathbf x}_i}\pm h_{m} {{\mathbf e}_1}) {\Big )} - {\delta_ {n_m,m}}\geq $$
(using \eqref{*11a})
$$ \geq  {\Big (} {\sum_{i=1}^{d+1}} p_{i} {f_ {n_m}} ( {{\mathbf x}_i}) {\Big )} \pm  {\Big (}
{\sum_{i=1}^{d+1}} p_{i}\dd_{1} {f_ {n_m}}( {{\mathbf x}_i})  {\Big )} h_{m}-\frac{M_{n_{m}}}{2}h_{m}^{2}- {\delta_ {n_m,m}}\geq \circledast$$
using $|f- {f_ {n_m}}|< {\delta_ {n_m,m}}$,  $f( {{\mathbf x}_i})= {\fff_ {1,f}}( {{\mathbf x}_i})$ and ${\sum_{i=1}^{d+1}} p_{i}f( {{\mathbf x}_i})= {\fff_ {1,f}}(\bbx)$ we can continue by
$$\circledast \geq  {\fff_ {1,f}}(\bbx)- {\delta_ {n_m,m}}\pm  {\Big (} {\sum_{i=1}^{d+1}} \dd_{1} {f_ {n_m}}( {{\mathbf x}_i})
 {\Big )} h_{m}-\frac{M_{n_{m}}}{2}h_{m}^{2}- {\delta_ {n_m,m}}.$$
By \eqref{*9a} and \eqref{*10a} we obtain that
$$\frac{{\fff_ {1,f}}(\bbx)- {\fff_ {1,f}}(\bbx-h_{m} {{\mathbf e}_1})}{h_{m}}\leq
 {\Big (} {\sum_{i=1}^{d+1}} \dd_{1} {f_ {n_m}}( {{\mathbf x}_i})
 {\Big )} +  {\Big (} \frac{M_{n_{m}}}{2} + 2  {\Big )} h_{m}
$$
and similarly
$$\frac{{\fff_ {1,f}}(\bbx+h_{m} {{\mathbf e}_1})- {\fff_ {1,f}}(\bbx)}{h_{m}}\geq
 {\Big (} {\sum_{i=1}^{d+1}} \dd_{1} {f_ {n_m}}( {{\mathbf x}_i})
 {\Big )} -  {\Big (} \frac{M_{n_{m}}}{2} + 2  {\Big )} h_{m}
$$
and hence,
$$
\frac{{\fff_ {1,f}}(\bbx)- {\fff_ {1,f}}(\bbx-h_{m} {{\mathbf e}_1})}{h_{m}}
-\frac{{\fff_ {1,f}}(\bbx+h_{m} {{\mathbf e}_1})- {\fff_ {1,f}}(\bbx)}{h_{m}}\leq
$$
$$(M_{n_{m}}+4)h_{m}= {\Big (} \frac{M_{n_{m}}+4}{M_{n_{m}}} {\Big )} \cdot \frac{1}{m}\leq \frac{5}{m}.$$
Since $ {\fff_ {1,f}}$ is concave this implies that $\dd_{1} {\fff_ {1,f}}(\bbx)$
exists.

Next we verify that  $\dd_{1} {\fff_ {1,f}}$ is continuous on $ {(0,1)^d}$.
We have seen that 
$\dd_{1}\fff_{1,f}
(\bbx+t{\mathbf e}_1
)$ is a monotone decreasing
continuous function in $t$ for a fixed $\bbx\in (0,1)^d$. We need to show that $\dd_{1}\fff_{1,f}
$
is continuous as a function of several variables at any $\bbx\in (0,1)^d
$.
This is quite standard. Suppose $\bbx\in(0,1)^d
$ and $\eee>0$ are fixed.
Choose $t_{0}>0$  such that 
$$\bbx\pm 2t_{0}{\mathbf e}_1
\in(0,1)^d
,\qquad
|\dd_{1}\fff_{1,f}
(\bbx\pm2t_{0}{\mathbf e}_1
)-\dd_{1}\fff_{1,f}
(\bbx)|<\frac{\eee}{2}.$$

The function $\fff_{1,f}
$ is continuous as the ``top part" of the convex hull $H_{f}$
of the continuous function $f$.
By uniform continuity of $\fff_{1,f}
$ choose $\ddd_{1}>0$  such that 
\begin{equation}\label{*A1*b}
|\fff_{1,f}
(\bbw)-\fff_{1,f}
(\bbw')|<\frac{\eee t_{0}}{4}\text{ if }
||\bbw-\bbw'||<\ddd_{1} \text{ and }
\bbw,\bbw'\in(0,1)^d
.
\end{equation}
Now suppose that 
\begin{equation}\label{*A1*a}
\text{$\bbz=(z_{1},...,z_{d})$ is a vector with $z_{1}=0$ and }||\bbz||<\ddd_{1}.
\end{equation}

Then by \eqref{*A1*b}
\begin{equation}\label{*A2*a}
\Big | \frac{\fff_{1,f}
(\bbx+\bbz+2t_{0}{\mathbf e}_1
)-\fff_{1,f}
(\bbx+\bbz+t_{0}{\mathbf e}_1
)}{t_{0}}
-
\end{equation}
$$ \frac{\fff_{1,f}
(\bbx+2t_{0}{\mathbf e}_1
)-\fff_{1,f}
(\bbx+t_{0}{\mathbf e}_1
)}{t_{0}} \Big |
<\frac{\eee}{2}.$$

By the Mean Value Theorem
  there exist $c_{\bbx}$ and $c_{\bbz}$ in $(t_{0},2t_{0})$  such that 
\begin{equation}\label{*A2*b}
 \frac{\fff_{1,f}
(\bbx+\bbz+2t_{0}{\mathbf e}_1
)-\fff_{1,f}
(\bbx+\bbz+t_{0}{\mathbf e}_1
)}{t_{0}}
=\dd_{1}\fff_{1,f}
(\bbx+\bbz+c_{\bbz}{\mathbf e}_1
)
\end{equation}
 and
$$\frac{\fff_{1,f}
(\bbx+2t_{0}{\mathbf e}_1
)-\fff_{1,f}
(\bbx+t_{0}{\mathbf e}_1
)}{t_{0}}
=\dd_{1}\fff_{1,f}
(\bbx+c_{\bbx}{\mathbf e}_1
).$$

By monotonicity of $\dd_{1}\fff_{1,f}
$ we have
\begin{equation}\label{*A2*c}
|\dd_{1}\fff_{1,f}
(\bbx+c_{\bbx}{\mathbf e}_1
)-\dd_{1}\fff_{1,f}
(\bbx)|
<|\dd_{1}\fff_{1,f}
(\bbx+2t_{0}{\mathbf e}_1
)-\dd_{1}\fff_{1,f}
(\bbx)|<\frac{\eee}{2}
.\end{equation}

From \eqref{*A2*a}, \eqref{*A2*b} and \eqref{*A2*c}
it follows that
 \begin{equation}\label{*A3*a}
 |\dd_{1}\fff_{1,f}
(\bbx)-\dd_{1}\fff_{1,f}
(\bbx+\bbz+c_{\bbz}{\mathbf e}_1
)|
 <\eee \text{ where }t_{0}<c_{\bbz}.
 \end{equation}
 
 A similar argument can show that 
 there is $c_{\bbz}'\in (t_{0},2t_{0})$
 such that
 $$ \frac{\fff_{1,f}
(\bbx+\bbz-t_{0}{\mathbf e}_1
)-\fff_{1,f}
(\bbx+\bbz-2t_{0}{\mathbf e}_1
)}{t_{0}}
=\dd_{1}\fff_{1,f}
(\bbx+\bbz-c_{\bbz}'{\mathbf e}_1
)$$
and 
 \begin{equation}\label{*A3*b}
 |\dd_{1}\fff_{1,f}
(\bbx)-\dd_{1}\fff_{1,f}
(\bbx+\bbz-c_{\bbz}'{\mathbf e}_1
)|
 <\eee \text{ where }t_{0}<c_{\bbz}'.
 \end{equation}
 
 By monotonicity of $\dd_{1}\fff_{1,f}
(\bbx+\bbz+t{\mathbf e}_1
)$
 this implies that for $-t_{0}\leq t\leq t_{0}$
 we have
 $$|\dd_{1}\fff_{1,f}
(\bbx+\bbz+t{\mathbf e}_1
)-\dd_{1}\fff_{1,f}
(\bbx)|<\eee.$$
 Since this holds for any $\bbz$ satisfying \eqref{*A1*a}
 we obtain that $\dd_{1}\fff_{1,f}
$ is continuous on $(0,1)^d
$.\end{proof}

\begin{lemma}\label{*14lem}
There exists a dense $G_{\ddd}$ set $\cag_{2,1}$ in $ {C[0,1]^d}$ such that
for every $f\in \cag_{2,1}$
\begin{equation}\label{*14a}
\dim_{H}(E_{1,f})=0,\ \dim_{H}( {\mathbf E}_{{\fff_ {1,f}}}^{\leq 1}\cap {(0,1)^d})=d-1, 
\end{equation}
$${\mathbf E}_{{\fff_ {1,f}}}^{h}\cap {(0,1)^d}=\ess \text{  for  }1<h<+\oo, \text{  and  }
\dim_{H}({\mathbf E}_{{\fff_ {1,f}}}^{+\oo})=d.$$
\end{lemma}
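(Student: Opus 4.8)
The plan is to work inside the dense open set $\cag_{1}$ of Lemma~\ref{*6lem}, so that throughout we may assume $\fff_{1,f}$ is concave and $C^{1}$ on $(0,1)^{d}$; by Remark~\ref{*rem7} this already gives $h_{\fff_{1,f}}(\bbx)\ge 1$ for all $\bbx\in(0,1)^{d}$, so that $\{\bbx\in(0,1)^{d}:h_{\fff_{1,f}}(\bbx)\le 1\}=\mathbf{E}_{\fff_{1,f}}^{1}\cap(0,1)^{d}$, and the whole assertion becomes a statement about the single concave $C^{1}$ function $\fff_{1,f}$. The set $\cag_{2,1}$ will be $\cag_{1}$ intersected with further dense $G_{\ddd}$ sets built from the fixed smooth dense family $\{f_{n}\}$, and the analysis is organised around the \emph{flat faces} of $\fff_{1,f}$, i.e.\ the maximal convex sets on which it is affine.

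For $\dim_{H}E_{1,f}=0$ the mechanism is soft. First, $g\mapsto\fff_{1,g}$ is $1$-Lipschitz for the supremum metric (pass $\mathrm{graph}(g)\sse\mathrm{graph}(h)+\{0\}\times[-\|g-h\|_{\oo},\|g-h\|_{\oo}]$ to convex hulls), so $E_{1,f}=\{\fff_{1,f}=f\}$ is closed and depends upper–semicontinuously on $f$: if $E_{1,g_{0}}$ is covered by a finite union $U$ of open balls then $\inf_{[0,1]^{d}\sm U}(\fff_{1,g_{0}}-g_{0})>0$ by compactness, hence $E_{1,f}\sse U$ for every $f$ in a whole $C[0,1]^{d}$–ball around $g_{0}$. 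Second, an independent piecewise linear $f$ (Definition~\ref{*1bbdef}), after an arbitrarily small generic perturbation of its vertex heights, has a \emph{finite} contact set: the flat faces of $\fff_{1,f}$ are $d$–simplices with vertices in $V$, the independence hypotheses forbid any further vertex of $V$ on such a face, and generic heights prevent any single triangulation simplex from lying on the hull, so $E_{1,f}$ reduces to finitely many vertices and is coverable by balls of arbitrarily small $s$–dimensional content for every $s>0$. Hence for each $k$ the set of $f$ whose $E_{1,f}$ is coverable by finitely many balls of total $\tfrac1k$–dimensional Hausdorff content $<\tfrac1k$ contains a dense open set, and the countable intersection forces $\dim_{H}E_{1,f}=0$.

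Next comes the geometric description of $\fff_{1,f}$ off $E_{1,f}$, resting on Lemma~\ref{*1clem}: if $\bbx\in(0,1)^{d}\sm E_{1,f}$ then $\bbx$ lies in the relative interior of a simplex $\sigma$ with vertices in $E_{1,f}$ on which $\fff_{1,f}$ is affine (equality in Jensen for a concave function forces affinity on the convex hull of the active vertices). When $\sigma$ is $d$–dimensional, $\fff_{1,f}$ is affine in a neighbourhood of $\bbx$ and $h_{\fff_{1,f}}(\bbx)=+\oo$; the $d$–dimensional flat faces have pairwise disjoint relative interiors, hence are countably many, so their union is a full–measure open subset of the complement of $E_{1,f}$ once — as the construction will guarantee — there are no positive–measure "ruled" regions foliated by lower–dimensional faces. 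Thus, modulo the last part of the proof, $\mathbf{E}_{\fff_{1,f}}^{+\oo}$ has full Lebesgue measure in $(0,1)^{d}$, giving $\dim_{H}\mathbf{E}_{\fff_{1,f}}^{+\oo}=d$, while $\mathbf{E}_{\fff_{1,f}}^{1}\cap(0,1)^{d}$ is contained in $E_{1,f}$ together with the union of the boundaries $\dd F$ of the $d$–dimensional flat faces $F$, a set of dimension at most $d-1$.

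The heart of the proof is the remaining dense $G_{\ddd}$ set, forcing for every $\bbx\in(0,1)^{d}$ the dichotomy $h_{\fff_{1,f}}(\bbx)\in\{1,+\oo\}$ and $\dim_{H}(\{\bbx\in(0,1)^{d}:h_{\fff_{1,f}}(\bbx)=1\})=d-1$. Since $\fff_{1,f}$ is $C^{1}$, at any $\bbx$ the only competitor of degree $<2$ in \eqref{defpoint} is the tangent map $\pi_{\bbx}$; the nonnegative convex function $\pi_{\bbx}-\fff_{1,f}$ vanishes at $\bbx$ together with its gradient, and $h_{\fff_{1,f}}(\bbx)>1$ means exactly that this gap is $O(|\bbx'-\bbx|^{1+\eee})$ for some $\eee>0$, its size being governed by how the flat faces accumulate near $\bbx$. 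One takes $\cag_{2,1}=\cag_{1}\cap\bigcap_{m}\bigcup_{n}B(f_{n,m},\delta_{n,m})$ where, as in Lemma~\ref{*2lem}, the approximants $f_{n,m}$ are obtained from $f_{n}$ by subtracting rapidly decaying bumps of the form $\tfrac{1}{n+m}\dist(\bbx,\Pi)^{1/m}$ supported near the members $\Pi$ of a fixed countable family of affine hyperplanes meeting $(0,1)^{d}$, plus suitable strictly concave corrections keeping the hull non-affine; for such $f$ the hull is pinned so flatly along each $\Pi$ that near every edge/contact point the gap $\pi_{\bbx}-\fff_{1,f}$ is either $o(|\bbx'-\bbx|^{N})$ for all $N$ (hence $h_{\fff_{1,f}}(\bbx)=+\oo$) or of exact order one, not $O(|\bbx'-\bbx|^{1+\eee})$ for any $\eee>0$ (hence $h_{\fff_{1,f}}(\bbx)=1$), which rules out all finite exponents in $(1,+\oo)$ and also excludes positive–measure ruled regions, completing the measure statement for $\mathbf{E}_{\fff_{1,f}}^{+\oo}$. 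The exact–order–one set is carried by the $(d-1)$–dimensional edge skeleton $\bigcup_{F}\dd F$; showing that it actually has Hausdorff dimension $d-1$ — the upper bound by confining all non-affine behaviour to that skeleton and to $E_{1,f}$, the lower bound by the persistence, for every $f$ in the dense $G_{\ddd}$ set, of a genuinely codimension–one $C^{1}$ transition between flat faces carrying distinct gradients — is the main obstacle, and is where the interplay of the hull operation, the $C^{1}$ regularity of Lemma~\ref{*6lem}, and the above perturbations must be pushed through with care. The statement for $\fff_{2,f}$ follows by replacing $f$ with $-f$.
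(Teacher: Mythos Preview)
Your overall framework and the treatment of $\dim_H E_{1,f}=0$ via independent piecewise linear approximants with finite contact sets matches the paper. The genuine gap is in the ``heart of the proof''. The mechanism you propose for the dichotomy $h_{\fff_{1,f}}(\bbx)\in\{1,+\oo\}$ --- subtracting $\tfrac{1}{n+m}\dist(\bbx,\Pi)^{1/m}$ near a fixed family of hyperplanes --- is lifted from Lemma~\ref{*2lem}, where it forces $h=0$ at the \emph{boundary}; in the interior it does not do what you want. Subtracting such a term puts a sharp ridge in $f$ along $\Pi$: either that ridge enters $E_{1,f}$ (contradicting $\dim_H E_{1,f}=0$), or the hull top spans affinely over it (giving $h=+\oo$ there, not $h=1$). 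In neither case do you obtain the decisive estimate --- that at every edge point and for \emph{every} supporting hyperplane $L$ there is a nearby $\bbx'$ with $|L(\bbx')-\fff_{1,f}(\bbx')|\ge c\,\|\bbx'-\bbx\|^{1+1/m}$ --- which is what actually pins $h\le 1$ and kills all finite exponents in $(1,+\oo)$. The exclusion of positive-measure ruled regions is likewise left as a bare assertion, though the whole $\dim_H \mathbf{E}_{\fff_{1,f}}^{+\oo}=d$ claim rests on it. You yourself flag this passage as ``the main obstacle\dots\ must be pushed through with care'', which is an acknowledgment that the argument is incomplete at its crucial step.

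The paper fills exactly this gap by controlling $\fff_{1,f_{n,m}}$ directly rather than $f_{n,m}$. One first replaces $f_n$ by an independent piecewise linear function on a fine mesh $V(n,m)$ and then \emph{adds} small steep tent peaks at the vertices, calibrated so that $E_{1,f_{n,m}}\sse V(n,m)$; hence $\fff_{1,f_{n,m}}$ is itself independent piecewise linear. Independence now yields a uniform break constant $\tau_{1,n,m}>0$ along its $(d{-}1)$-skeleton $\Phi_{\fff}$, giving $|L(\bbx')-\fff_{1,f_{n,m}}(\bbx')|\ge \tau_{1,n,m}^{1+1/m}$ for any tangent plane at an edge point. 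For $f$ sufficiently close to $f_{n,m}$ the $d$-faces of $\fff_{1,f}$ are $r_{n,m}$-perturbations of those of $\fff_{1,f_{n,m}}$, and the complement $\Phi_{\fff,f}$ of their interiors sits in $B(\Phi_{\fff},r_{n,m})$, which is arranged to have $(d{-}1{+}\tfrac1m)$-content $<\tfrac1m$. Intersecting over $m$ gives $\Phi_f$ with $\dim_H\Phi_f\le d-1$, outside of which $\fff_{1,f}$ is locally affine (so $h=+\oo$ there, settling both the ruled-region issue and the $+\oo$ level set), while the break estimate persists on $\Phi_f$ and forces $h\le 1$; the boundary of any single $d$-face of $\fff_{1,f}$ gives $\dim_H\Phi_f\ge d-1$. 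The idea you are missing is precisely this: engineer approximants whose \emph{hull top} is piecewise linear with quantified breaks, and then argue stability of that simplicial structure under perturbation.
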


\begin{proof}
We choose again a countable  dense set $f_{n}\in {C[0,1]^d}$.
The functions $f_{n}$ are uniformly continuous and for $\frac{1}{16(n+m)}$
we choose $ {\eta_ {n,m}}>0$  such that 
\begin{equation}\label{*15a}
|f_{n}(\bbx)-f_{n}(\bby)|<\frac{1}{16(n+m)}\text{  if  }||\bbx-\bby||<
 {\eta_ {n,m}}, \  \bbx,\bby\in  {[0,1]^d}.
\end{equation}
We partition $ {[0,1]^d}$ into non-overlapping simplices 
$Z_{j}$, $j=1,...,\zeta_{n,m}$
 such that  the diameter of each simplex is less
than $ {\eta_ {n,m}}$. We assume that $ {V(n,m)}$ is the set of vertices of these simplices.
We can also assume that  these vertices are sufficiently independent, that is, from
 $\bbx_{1},...,\bbx_{k}\in  {V(n,m)}\cap {(0,1)^d}$, $\bbx_{i}\not=\bbx_{j}$, $i\not=j$,
 the points 
$\bbx_{j}$, $j=1,...,k$ are on the same $d-1$-dimensional hyperplane in $ {\ensuremath {\mathbb R}}^{d}$ it
follows that $k\leq d$. This means that the second assumption in Definition
\ref{*1bbdef} is satisfied.

\begin{figure}
  \begin{center}
  \includegraphics[width=14.1cm,height=8.1cm]{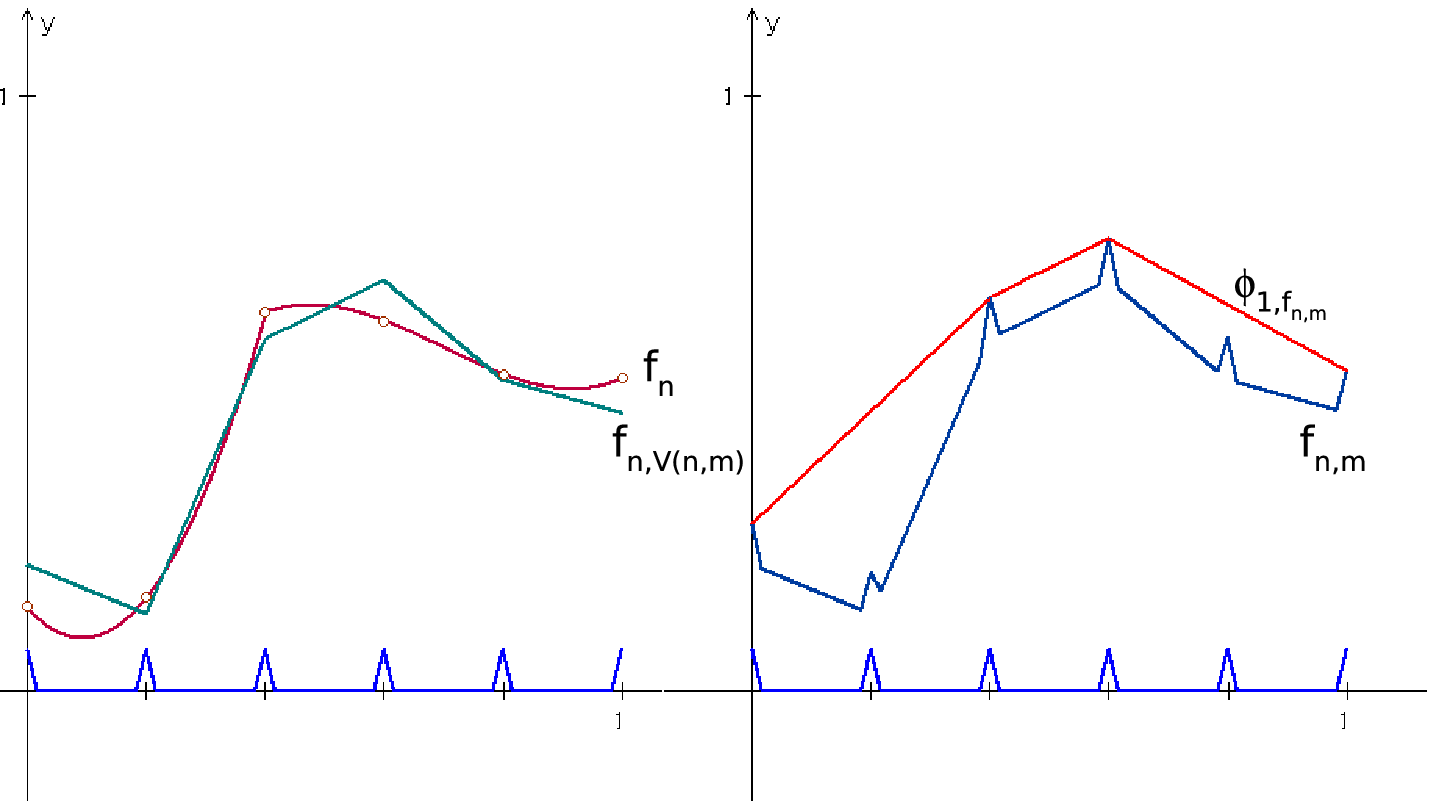}
    \caption{The functions $f_{n}$, $f_{n,V(n,m)}$, $f_{n,m}$ and $ {\fff_ {1,f_ {n,m}}}$}
     \label{fig0}
  \end{center}
\end{figure}

We denote by $ {{\widetilde {f}}_ {n,V(n,m)}}$ the function which is defined on $ {V(n,m)}$ and for any 
$\bbx\in {V(n,m)},$ $ {{\widetilde {f}}_ {n,V(n,m)}}(\bbx)=f_{n}(\bbx).$
On Figure \ref{fig0} we illustrate the procedure of selecting $f_{n,m}$. On the left half  of this figure there is $f_{n}$ with the little circles on its graph.
We suppose that on $[0,1]^1$ we used the ``simplices", which are equally spaced line
segments of length $0.2$. The function $ {{\widetilde {f}}_ {n,V(n,m)}}$ is defined on these points
and its graph is represented by the little circles on the graph of $f_{n}$.

Now we perturb $ {{\widetilde {f}}_ {n,V(n,m)}}$ a bit in order to obtain an ``independent" function $ {{\overline {f}}_ {n,V(n,m)}}$
 such that 
 \begin{equation}\label{*15b}
 | {{\overline {f}}_ {n,V(n,m)}}- {{\widetilde {f}}_ {n,V(n,m)}}|<\frac{1}{16(n+m)}
 \end{equation}
 and if $ {{\mathbf x}_0},...,\bbx_{k}\in {V(n,m)}$, $\bbx_{i}\not=\bbx_{j}$ if $i\not=j$,
 and $(\bbx_{j},{{\overline {f}}_ {n,V(n,m)}}(\bbx_{j}))$, $j=1,...,k$ are on the same hyperplane
 in $ {\ensuremath {\mathbb R}}^{d+1}$ then $k\leq d+1$.
 This implies that the first assumption of Definition \ref{*1bbdef} is satisfied
 for ${{\overline {f}}_ {n,V(n,m)}}$.

 If $\bbx\in  {[0,1]^d}\sm  {V(n,m)}$ and $\bbx$ is in the simplex $Z_{j}$,
$j\in \{ 1,...,\zeta_{n,m} \}$ with vertices
 $\bbz_{j,1},...,\bbz_{j,d+1}\in  {V(n,m)}$ we define $ {f_ {n,V(n,m)}}(\bbx)$ so that 
 $(\bbx, {f_ {n,V(n,m)}}(\bbx))$ is on the hyperplane determined by the points
 $$(\bbz_{j,1}, {{\overline {f}}_ {n,V(n,m)}}(\bbz_{j,1})),...,(\bbz_{j,d+1},
  {{\overline {f}}_ {n,V(n,m)}}(\bbz_{j,d+1})).$$
 Therefore, $ {f_ {n,V(n,m)}}$ is an independent piecewise linear function (recall Definition
 \ref{*1bbdef} and see the illustration on the left half of Figure \ref{fig0}).

 By \eqref{*15a} and \eqref{*15b}
 \begin{equation}\label{*16a}
 | {f_ {n,V(n,m)}}(\bbx)-f_{n}(\bbx)|<\frac{1}{4(n+m)}.
 \end{equation}
 
 Now we want to perturb the functions $ {f_ {n,V(n,m)}}$ a little further.
 Let $\kkk(\bbx)=\max \{1-||\bbx||,0 \}$ and for a $\ggg>0$
 put
 $$ {\Gamma_ {V(n,m),\gamma}}(\bbx)=\chi_{{[0,1]^d}}(\bbx)\sum_{\bbv\in {V(n,m)}}\kkk(\bbx/\ggg).$$
 Then $\lim_{\ggg\to 0+} {\Gamma_ {V(n,m),\gamma}}(\bbx)=\chi_{{V(n,m)}}(\bbx).$
 
 We denote by $ {\nu(n,m)}$ the minimum distance among points of $ {V(n,m)}$ and
 will select a sufficiently small $\ggg {_ {n,m}}>0$ later.
 We put
 $$ {f_ {n,m}}= {f_ {n,V(n,m)}}+\frac{1}{4(n+m)}\Gamma_{{V(n,m)},\ggg {_ {n,m}}}.$$
 On both halves of Figure \ref{fig0} one can see $\frac{1}{4(n+m)}\Gamma_{{V(n,m)},\ggg {_ {n,m}}}$ which is the function with the equally spaced small peaks
 at the points which are multiples of $0.2$. On the right half of Figure \ref{fig0}
one can see $f_{n,m}$ which is obtained from $ {f_ {n,V(n,m)}}$ (pictured on the left half of Figure \ref{fig0}) after we added the small peaks.  
 
We suppose that 
 \begin{equation}\label{*18a}
 \ggg {_ {n,m}}<\frac{{\nu(n,m)}}{100}
 \end{equation}
 and if ${\mathfrak G}(n,m)$ denotes the maximum of the norms of the gradients of the hyperplanes in the definition of $ {f_ {n,V(n,m)}}$ then
\begin{equation}\label{*18b}
\frac{1}{\ggg {_ {n,m}}}>100 \cdot {\mathfrak G}(n,m).
\end{equation} 
This way if we take the convex hull of $ {f_ {n,m}}$ then
$$ {E_ {1,f_ {n,m}}}=\{\bbx: {f_ {n,m}}(\bbx)= {\fff_ {1,f_ {n,m}}}(\bbx) \}$$
will be a subset of $ {V(n,m)}$. See the right half of Figure \ref{fig0}.
We remark that the resolution of our drawings does not make it possible to take into
all the above assumptions and hence they are distorted, but we hope that
they can help to understand our procedure.  

Our next aim is to select a sufficiently small $ {\delta_ {n,m}}>0.$
It is clear that given $r {_ {n,m}}>0$ if $ {\delta_ {n,m}}$ is sufficiently small
then 
$f$ can coincide with $ {\fff_ {1,f}}$ only close to some points
in $V(n,m)$, that is 
for any $f\in B( {f_ {n,m}}, {\delta_ {n,m}})$ and any $\bbx\in E_{1,f}$
there is $ {{\mathbf w}_ {\mathbf x}}\in {V(n,m)}$  such that 
\begin{equation}\label{*19a}
|| {{\mathbf w}_ {\mathbf x}}-\bbx||<r {_ {n,m}}.
\end{equation}
On the left half of Figure \ref{fig2} one can see $f_{n,m}$ and $f$. We also graphed
the functions $ {\fff_ {1,f_ {n,m}}}$ and $ {\fff_ {1,f}}$ which will be very close to each other.
The latter function is not exactly piecewise linear but a close approximation to such a function, namely to $ {\fff_ {1,f_ {n,m}}}$.
In the one dimensional case, like on Figure \ref{fig2} the nonlinear parts
(not pictured) are very cloes to some elements of $V(n,m)$. The higher dimensional
case is a bit more complicated and we discuss it below.

\begin{figure}
  \begin{center}
  \includegraphics[width=14.1cm,height=8.1cm]{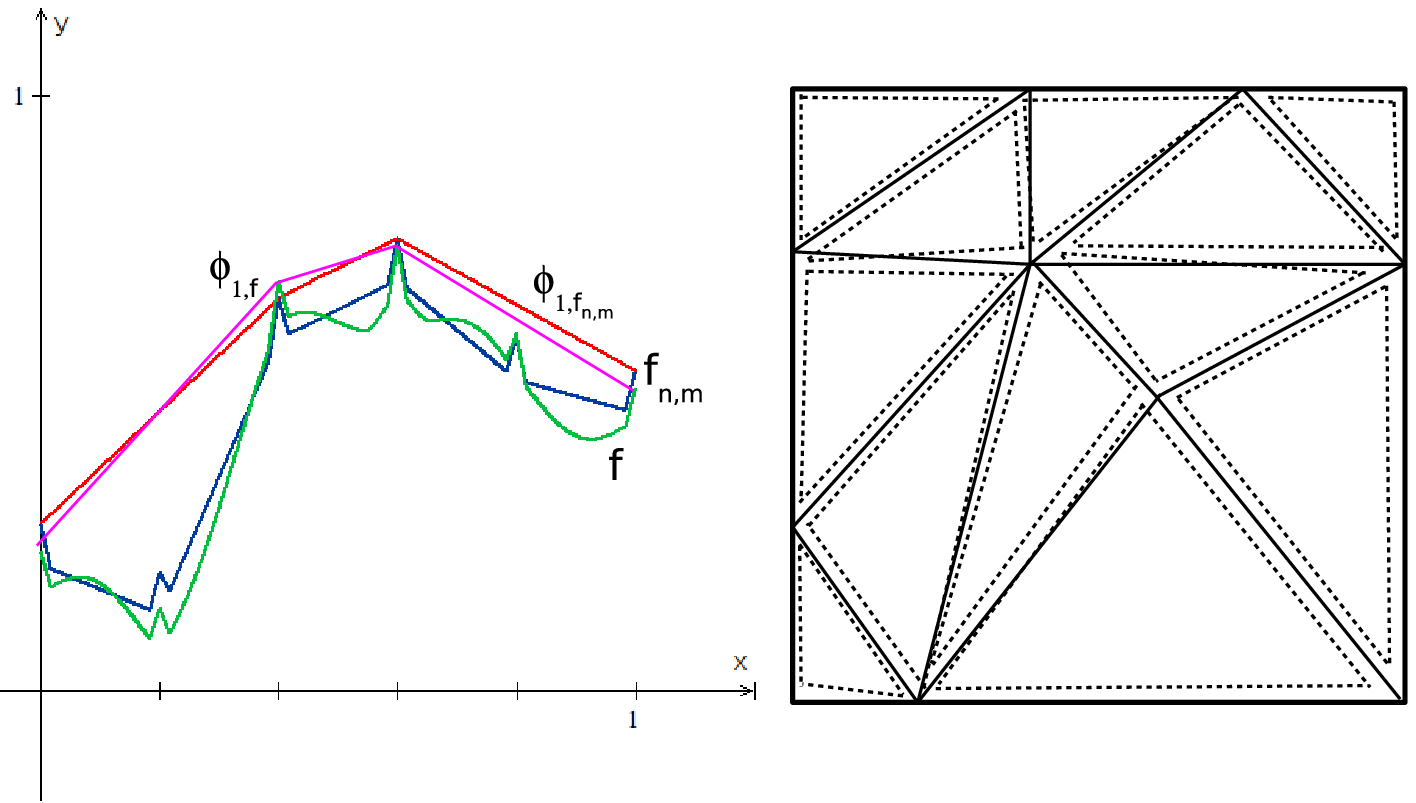}
    \caption{On the left: $f_{n,m}$ and $f$, on the right: $ {{\cal S}_ {\fff}}$ and $\Phi_{\fff,f}$ when $d=2$}
     \label{fig2}
  \end{center}
\end{figure}

We will select a sufficiently small $r {_ {n,m}}>0$ later. At this point we suppose that
\begin{equation}\label{*19b}
r {_ {n,m}}<\frac{{\nu(n,m)}}{1000}.
\end{equation}
This implies that for any $\bbx\in E_{1,f}$ there is a unique $ {{\mathbf w}_ {\mathbf x}}\in {V(n,m)}$.
That is, $f$ will ``almost" look like a piecewise linear function. This implies that if
$f\in B( {f_ {n,m}}, {\delta_ {n,m}})$ then 
\begin{equation}\label{**10a}
E_{1,f}\text{  can be covered by the set  }\cup_{\bbw\in V(n,m)}B(\bbw,r_{n,m}).
\end{equation}
We can suppose that $r {_ {n,m}}$ is chosen so small, that
\begin{equation}\label{**10b}
\#(V(n,m))\cdot r_{n,m}^{1/m}<1/m.
\end{equation}
This estimate will imply that $\dim_{H}E_{1,f}=0$ for the typical
$f\in {C[0,1]^d}$, that is for $f\in \cag_{2,1}$.

By the choice of $\ggg {_ {n,m}}$ if we consider $ {\fff_ {1,f_ {n,m}}}$ then it is an independent piecewise linear function.
There is a system of non-overlapping simplices
$$ {{\cal S}_ {\fff}}=\{S_{k}:k=1,...,s_{\fff} \}$$
 such that $(\bbx, {\fff_ {1,f_ {n,m}}}(\bbx))$ for any $\bbx\in  {[0,1]^d}$ is on a hyperplane
 determined by a simplex $S_{k}$ containing $\bbx$. On the left half of Figure \ref{fig2} the one dimensional case is illustrated and these simplices are simply the line segments
 $[0,0.4]$, $[0.4,0.6]$ and $[0.6,1]$.
 The endpoints $0.4$ and $0.6$ are points where this function "breaks"
 and these points are on two non-parallel lines ("hyperplanes").
 These breakpoints/folding regions  will be used to find those points
 where the H\"older exponent is $1$. On the right half of Figure \ref{fig2}
 the two dimensional case $d=2$ is illustrated. This time we have simplices (triangles) in $[0,1]^{d}$
 bounded by solid lines on which  $ {\fff_ {1,f_ {n,m}}}$ is linear. 
 On the right half of Figure \ref{fig2} only the domain of $ {\fff_ {1,f_ {n,m}}}$
 is shown.
 The system of the
 simplices (triangles) bounded with  dashed lines will be simplices corresponding to $ {\fff_ {1,f}}$. Later we will explain this in more detail.
 
 By the independence property of $ {f_ {n,V(n,m)}}$ the hyperplanes
determined by the simplices $S_{k}$
 are different
 for different $S_{k}$.

 We denote by $V_{\fff}$ the set of the vertices of the simplices $S_{k}$,
 $k=1,...,s_{\fff}.$ Clearly, $V_{\fff}\sse  {V(n,m)}.$
 The union of the faces of these simplices will be denoted by $\Phi_{\fff}=
 \cup_{k=1}^{s_{\fff}}\dd(S_{k}).$
 If $\bbx_{0}\in \dd(S_{k})\cap \dd(S_{k'})$ with $k\not=k'$
 then $\{(\bbx, {\fff_ {1,f_ {n,m}}}(\bbx)):\bbx\in S_{k} \}$
 and $\{(\bbx, {\fff_ {1,f_ {n,m}}}(\bbx)):\bbx\in S_{k'} \}$
 are on different hyperplanes and hence the graph of
 $ {\fff_ {1,f_ {n,m}}}$ ``breaks" at $\bbx_{0}$. 
 This implies that we can choose $ {\tau_ {1,n,m}}$  such that for any $ {{\mathbf x}_\fff}\in\Phi_{\fff}\cap  {(0,1)^d}$ and for any hyperplane $L_{\bbx}$ passing through
 $( {{\mathbf x}_\fff}, {\fff_ {1,f_ {n,m}}}( {{\mathbf x}_\fff}))$ one can choose a point $ {\mathbf x}_\fff'$
  such that 
\begin{equation}\label{*22a}
 {\tau_ {1,n,m}}\leq \dist( {\mathbf x}_\fff',\Phi_{\fff})\leq \frac{1}{n+m},
\end{equation}
and
\begin{equation}\label{*22b}
|L_{\bbx}( {\mathbf x}_\fff')- {\fff_ {1,f_ {n,m}}}( {\mathbf x}_\fff')|\geq || {{\mathbf x}_\fff}- {\mathbf x}_\fff'||^{1+\frac{1}{m}}\geq
 {\tau_ {1,n,m}}^{1+\frac{1}{m}}, 
\end{equation}
where we used that \eqref{*22a} implies $|| {\mathbf x}_\fff'- {{\mathbf x}_\fff}||> {\tau_ {1,n,m}}$.
See Figure \ref{fig4}.

\begin{figure}
  \begin{center}
  \includegraphics[width=14.1cm,height=8.1cm]{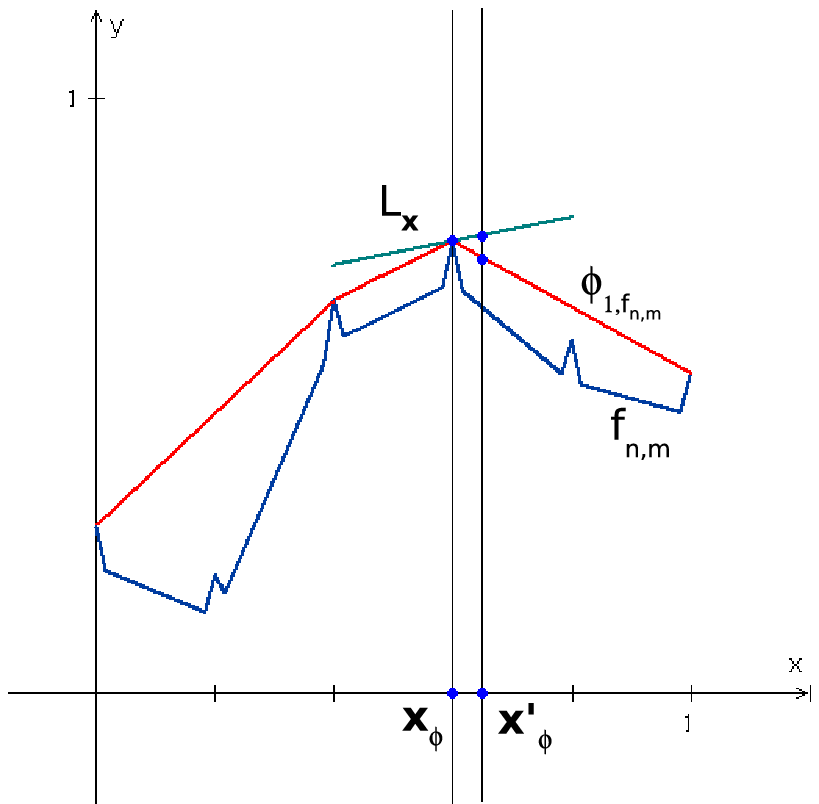}
    \caption{The breaking point at  $( {{\mathbf x}_\fff}, {\fff_ {1,f_ {n,m}}}( {{\mathbf x}_\fff}))$}
     \label{fig4}
  \end{center}
\end{figure}

It is also clear that if $f$ is a good approximation of $ {f_ {n,m}}$ then one can see
similar ``breaking" properties on $\fff_{1,f}$. This time there are no ``folding edges"
like in the case of $ {f_ {n,m}}$ on $\Phi_{\fff}\cap  {(0,1)^d}$ but there are
regions around $\Phi_{\fff}$ where we can see similar phenomena.

Using that $ {f_ {n,m}}$ and $ {\fff_ {1,f_ {n,m}}}$ are both independent piecewise linear
functions one can see that
\begin{equation}\label{*23a}
 {\Delta_ {1,n,m}}(\ddd)=\sup\{| {\fff_ {1,f_ {n,m}}}(\bbx)- {\fff_ {1,f}}(\bbx)|:\bbx\in {[0,1]^d},\ f\in B( {f_ {n,m}},\ddd) \}\to 0 
\end{equation}
$\text{  as  }\ddd\to 0+.$
Apart from \eqref{*19b} and \eqref{**10b} we also assume that $r {_ {n,m}}>0$ is chosen so small 
that
\begin{equation}\label{*23b}
r {_ {n,m}}<\frac{{\tau_ {1,n,m}}}{100}
\end{equation}
and (using that $\dim_{H}\Phi_{\fff}=d-1$)
\begin{equation}\label{*23c}
B(\Phi_{\fff},r {_ {n,m}})\text{  can be covered by balls  }B_{i}\text{  such that  }
\end{equation}
$$|B_{i}|<\frac{1}{n+m}$$
and
\begin{equation}\label{*23d}
\sum_{i}|B_{i}|^{(d-1)+\frac{1}{m}}<\frac{1}{m}.
\end{equation}

Recall that we started to make assumptions about $ {\delta_ {n,m}}$ in the paragraph
containing \eqref{*19a}. The smaller $r {_ {n,m}}$ we need to use the smaller
$ {\delta_ {n,m}}$. Next we suppose that using \eqref{*23a}
we chose a $ {\delta_ {n,m}}$  such that
in addition to our other assumptions we have  
\begin{equation}\label{*24a}
 {\Delta_ {1,n,m}}( {\delta_ {n,m}})<\frac{1}{100}  {\tau_ {1,n,m}}^{1+\frac{1}{m}}.
\end{equation}

Now we want to use the folding property in \eqref{*22a} and \eqref{*22b}
for functions $f$ which approximate $ {f_ {n,m}}$.
This time the ``folding edges" are not any more $(d-1)$-dimensional
surfaces, but some neighborhoods of them.
By \eqref{*23c} and \eqref{*23d} we will be able to bound the dimension
of these regions.

Suppose $S_{k}\in  {{\cal S}_ {\fff}}$ with $k\in \{1,...,s_{\fff} \}$
with vertices $\bbz_{k,1},...,\bbz_{k,d+1}$.
Since $ {f_ {n,m}}$ and $ {\fff_ {1,f_ {n,m}}}$ are both 
independent piecewise linear functions 
there is $\ddd_{\fff,k}>0$  such that if $f\in B( {f_ {n,m}},\ddd_{\fff,k})$
then one can choose vertices $\bbz_{k,j,f}$, $j=1,...,d+1$
 such that 
 \begin{equation}\label{*25a}
 ||\bbz_{k,j}-\bbz_{k,j,f}||<r {_ {n,m}} \text{  and  }\bbz_{k,j,f}\in E_{1,f},\ j=1,...,d+1,
 \end{equation}
moreover if $S_{k,f}$ denotes the simplex determined by
$\{\bbz_{k,j,f}: j=1,...,d+1 \}$ then $\{(\bbx,f(\bbx)):\bbx\in S_{k,f} \}$ is on the surface of $H_{f}$ inside a hyperplane determined
by
$\ds \{(\bbz_{k,j,f},f(\bbz_{k,j,f})):k=1,...,d+1 \}$, that is,
$\{(\bbx,{\fff_ {1,f}}(\bbx)):\bbx\in S_{k,f} \}$ is a ``face" of $ {\fff_ {1,f}}$
approximating $\{(\bbx,\fff_{1,f_{n,m}}(\bbx)):\bbx\in S_{k} \}$.
On the right half of Figure \ref{fig2} we have the two dimensional illustration.
The simplices (triangles) $S_{k}\in\cas_{\fff}$ are bounded by solid lines.
The simplices (triangles) $S_{k,f}$ are bounded by dashed lines. 

We can suppose that $ {\delta_ {n,m}}<\min\{\ddd_{\fff,k}:k=1,...,s_{\fff} \}$
and by using independent piecewise linearity of $ {f_ {n,m}}$
and $ {\fff_ {1,f_ {n,m}}}$ we obtain that the hyperplanes containing
$\{(\bbx,{\fff_ {1,f}}(\bbx)):\bbx\in S_{k,f} \}$
are different for different $k$'s.

Hence the simplices $S_{k,f}$ are non-overlapping. 

Put
\begin{equation}\label{*26a}
\Phi_{\fff,f}= {(0,1)^d}\sm \cup_{k=1}^{s_{\fff}} \intt(S_{k,f}).
\end{equation}

These sets $\Phi_{\fff,f}$
will replace the folding edges $\Phi_{\fff}\cap  {(0,1)^d}$.
On Figure \ref{fig2} this is the region which is not covered by
the interiors of the simplices (triangles) bounded by dashed lines.

From $||\bbz_{k,j}-\bbz_{k,j,f}||<r {_ {n,m}}$ in \eqref{*25a} it follows that any point $\bbx$ in $S_{k}$ which is of distance no less than $r {_ {n,m}}$
from $\dd(S_{k})$
is covered by $S_{k,f}$.

Thus $\Phi_{\fff,f}\sse B(\Phi_{\fff},r {_ {n,m}})$ and hence
by \eqref{*23c} and \eqref{*23d}
\begin{equation}\label{*27a}
\Phi_{\fff,f} \text{  can be covered by balls  }B_{i} \text{  such that  }
\end{equation}
$$|B_{i}|<\frac{1}{n+m}\text{  and  }\sum_{i}|B_{i}|^{d-1+\frac{1}{m}}<\frac{1}{m}.$$

Using all the above restrictions we can select $ {\delta_ {n,m}}>0$.

Set $\cag_{2,1}=\cap_{m=1}^{\oo}\cup_{n=1}^{\oo}B( {f_ {n,m}}, {\delta_ {n,m}})$.

It is clear that $\cag_{2,1}$ is a dense $G_{\ddd}$ set in $ {C[0,1]^d}$.

Suppose $f\in\cag_{2,1}$. Then there exists a sequence $n_{m}$  such that 
$f\in B( {{f_ {n_ {m} ,m}}}, {\delta_ {n_m,m}})$. For each $m$ we can define the "folding region" as
in \eqref{*26a}. Since these regions depend on $m$ we denote them by
$\Phi_{\fff,f,m}$.
Set $\Phi_{f}=\cap_{m=1}^{\oo}\Phi_{\fff,f,m}$.
If $\bbx\in  {(0,1)^d}\sm\Phi_{f}$ then there exists an $m$  such that 
$\bbx\in\intt(S_{k,f,m})$ with a simplex
$S_{k,f,m}$
 and $\{(\bbx,\fff_{1,f}(\bbx)):\bbx\in S_{k,f,m} \}$ is the subset of a hyperplane in $ {\ensuremath {\mathbb R}}^{d+1}$. This implies that $ {\fff_ {1,f}}$
is locally linear in a neighborhood of $\bbx$ and $h_{ {\fff_ {1,f}}}(\bbx)=+\oo$.

Using \eqref{*27a} one can easily see that $\dim_{H}(\Phi_{f})\leq d-1.$
On the other hand, from \eqref{*27a} it also follows that if $S\sse {(0,1)^d}$ 
is a simplex such that its vertices are $\bbz_{1},...,\bbz_{d+1}\in E_{1,f}$
then  there exists $m_{0}$  such that for $m\geq m_{0}$, $S\not\sse \Phi_{\fff,f,m}$. Since $S$ is a ``face" of $ {\fff_ {1,f}}$ if $\bbx\in\dd (S)$ then $\bbx$ cannot belong to the interior of any other ``face" of
$ {\fff_ {1,f}}$. Hence $\dd(S)\sse \Phi_{f}.$ Since $\dim_{H}\dd(S)=d-1$
we obtain  that $\dim_{H}(\Phi_{f})=d-1$.
If $\bbx\in \Phi_{f}$
then \eqref{*22a} and \eqref{*22b} imply that $h_{f}(\bbx)\leq 1.$

The property $\dim_{H}E_{1,f}=0$ follows from \eqref{**10a} and \eqref{**10b}.

\end{proof}

\begin{proof}[Proof of Theorem \ref{*30th}]
For $j=1,...,d$ a version of Lemma \ref{*2lem} can provide
dense $G_{\ddd}$ sets $\cag_{0,j}\sse {C[0,1]^d}$
 such that \eqref{*2a} and \eqref{*2b} hold with $d$ replaced by $j$.
 If we use the faces $F_{1,j}$ instead of $F_{0,j}$ in analogous versions of
 Lemma \ref{*2lem} we can obtain dense $G_{\ddd}$ sets
 $\cag_{1,j}\sse {C[0,1]^d}$.
 
Taking $\cag_{0}=\cap_{j=1}^{d}\cag_{0,j}\cap \cag_{1,j}$ for any $f\in \cag_{0}$
we have \eqref{*30c}, \eqref{*30b}
and \eqref{*30a} satisfied.

By Lemma \ref{*6lem}  there exists a dense open set $\cag_{1}\sse  {C[0,1]^d}$
 such that for any $f\in\cag_{1}$ the functions $ {\fff_ {1,f}}$ and $ {\fff_ {2,f}}$
are continuously  differentiable on $ {(0,1)^d}$.

There is nothing special about $ {\fff_ {1,f}}$ in Lemma \ref{*14lem}. A similar lemma can provide a dense $G_{\ddd}$ set $\cag_{2,2}$
 such that for any $f\in\cag_{2,2}$ we have 
 \eqref{*14a}
for $E_{2,f}$ and $ {\fff_ {2,f}}$.

If we take $\cag=\cag_{0}\cap\cag_{1}\cap\cag_{2,1}\cap\cag_{2,2}$
then 
taking into consideration
 Remark \ref{*rem7}
 as well
any $f\in\cag$ satisfies the conclusions of Theorem \ref{*30th}.
 \end{proof}


\begin{thebibliography}{99}



\bibitem{[BruHa]} A. M. Bruckner and  J. Haussermann,  
{\em 
Strong  porosity  features  of  typical 
continuous  functions,} 
Acta Math. Hung. 
45 (1--2),   7-13,  (1985). 


\bibitem{BUC}  Z. Buczolich, J. Nagy,  {\em H\"older spectrum of
typical monotone continuous functions}, {Real Anal. Exchange} {26},
 133--156, 2000/01.



 \bibitem{BuS2}{Z. Buczolich, S. Seuret},  {\it Typical Borel measures on $[0,1]^d$ satisfy a multifractal formalism},  Nonlinearity, Vol. 23(11) 2010.

 
 \bibitem{BSJMAA} {Z. Buczolich, S. Seuret,} {\it H\"older spectrum of   functions monotone  in several variables,} J. Math. Anal. Appl. 382 (2011), no. 1, 110-126.

 
\bibitem{BStcv} {Z. Buczolich, S. Seuret,} {\it Multifractal properties of typical convex functions,} to appear.

 

\bibitem{Eck} Eckhoff, J., Helly, {\it Radon, and Carath\'eodory type theorems,} {Handbook of Convex Geometry A, B,} (1993) Amsterdam: North-Holland, 389-448.

\bibitem{Fa1} {K. J. Falconer,} {\it Fractal Geometry,}
John Wiley \&
Sons, (1990).

 



 


%

\bibitem{[Ma]} P. Mattila, {\it Geometry of Sets and Measures in Euclidean
Spaces}, Cambridge University Press, (1995).
 








\end{thebibliography}
\end{document}